\documentclass{amsart}
\usepackage{amsthm,stmaryrd,amsmath,amscd}
\usepackage{amssymb}
\usepackage{epsfig}
\usepackage[usenames,dvipsnames]{color}
\usepackage{verbatim}
\usepackage{hyperref,subfig}
\usepackage{dsfont}
\usepackage{upquote}
\usepackage{mathrsfs,float}
\usepackage[all]{xy}
\usepackage[active]{srcltx}
\usepackage[utf8]{inputenc}
\bibliographystyle{amsplain}
\addtolength{\textwidth}{30pt}
\addtolength{\hoffset}{-15pt}
%

\DeclareUnicodeCharacter{1D70E}{{\sigma}}
\DeclareUnicodeCharacter{2297}{{\otimes}}
\DeclareUnicodeCharacter{2206}{{\Delta}}

\newcommand{\RH}{{R^H}}

\newcommand{\I}{\mathcal{I}}

\newcommand{\E}{\mathcal{E}}
\newcommand{\A}{{\mathcal A}}

\newcommand{\trace}{\operatorname{trace}}

\newcommand{\brk}[1]{{\left\langle{#1}\right\rangle}}

\newcommand{\gl}{\ensuremath{\mathfrak{gl}}}

\newcommand{\e}{{\operatorname{e}}}
\newcommand{\slt}{{\mathfrak{sl}(2)}}

\newcommand{\UH}{{U_\ii^{H}\slt}}

\newcommand{\cat}{\mathscr{C}}

\newcommand{\D}{\mathscr{D}}
\newcommand{\cR}{\mathscr{R}}
\newcommand{\Id}{\operatorname{Id}}
\newcommand{\bp}[1]{{\left(#1\right)}}

\newcommand{\End}{\operatorname{End}}
\newcommand{\Ad}{\operatorname{Ad}}

\newcommand{\sign}{\operatorname{sign}}

\newcommand{\C}{\ensuremath{\mathbb{C}} }
\newcommand{\Z}{\ensuremath{\mathbb{Z}} }
\newcommand{\Zq}{\ensuremath{\mathbb{Z}[q,q^{-1}]} }

\newcommand{\wb}{\overline}

\newcommand{\et}{{\quad\text{and}\quad}}
\newcommand{\ets}{{\,,\quad}}

\def\restriction#1#2{\mathchoice
              {\setbox1\hbox{${\displaystyle #1}_{\scriptstyle #2}$}
              \restrictionaux{#1}{#2}}
              {\setbox1\hbox{${\textstyle #1}_{\scriptstyle #2}$}
              \restrictionaux{#1}{#2}}
              {\setbox1\hbox{${\scriptstyle #1}_{\scriptscriptstyle #2}$}
              \restrictionaux{#1}{#2}}
              {\setbox1\hbox{${\scriptscriptstyle #1}_{\scriptscriptstyle #2}$}
              \restrictionaux{#1}{#2}}}
\def\restrictionaux#1#2{{#1\,\smash{\vrule height .8\ht1 depth .85\dp1}}_{\,#2}} 

\newtheorem{theo}{Theorem}[section]
\newtheorem{lemma}[theo]{Lemma}
\newtheorem{prop}[theo]{Proposition}
\newtheorem{cor}[theo]{Corollary}

\theoremstyle{definition}

\newtheorem{rem}[theo]{Remark}

\newtheorem{exo}[theo]{Exercise}

\theoremstyle{remark}

\renewcommand{\qedsymbol}{\fbox{\thetheo}}

\newcounter{exo} \newcounter{numexercice}
\renewcommand{\theexo}{\arabic{exo}}

\begin{document}
\title[Other quantum relatives of the Alexander polynomial]{Other
  quantum relatives of the Alexander polynomial through the
  Links-Gould invariants}

\author[B.M. Kohli]{Ben-Michael Kohli}
\address{IMB UMR5584, CNRS, Universit\'e Bourgogne Franche-Comt\'e, F-21000 Dijon, France}
\email{Ben-Michael.Kohli@u-bourgogne.fr}

\author[B. Patureau]{Bertrand Patureau-Mirand}
\address{UMR 6205, LMBA, Universit\'e de Bretagne-Sud, BP 573, 56017 Vannes, France }
\email{bertrand.patureau@univ-ubs.fr}
\thanks{.
 }\

\begin{abstract}
  Oleg Viro studied in \cite{Vi} two interpretations of the
  (multivariable) Alexander polynomial as a quantum link invariant:
  either by considering the quasi triangular Hopf algebra associated
  to $U_q\slt$ at fourth roots of unity, or by considering the
  super Hopf algebra $U_q\gl(1|1)$.  In this paper, we show these Hopf
  algebras share properties with the $-1$ specialization of
  $U_q\gl(n|1)$ leading to the proof of a conjecture of David De Wit,
  Atsushi Ishii and Jon Links on the Links-Gould invariants.
 \end{abstract}

\maketitle
\setcounter{tocdepth}{3}


\section{Introduction}
The Links-Gould invariants of links $LG^{n,m}$ are two variable
quantum link invariants. They are derived from super Hopf Algebras
$U_q\gl(n|m)$. David De Wit, Atsushi Ishii and Jon Links conjectured
\cite{DWIL} that for any link $L$
$$
LG^{n,m}(L ; \tau,e^{\bold{i} \pi/m}) = \Delta_L(\tau^{2m})^n,
$$
where $\Delta_L$ is the Alexander-Conway polynomial of $L$. They
proved the conjecture when $(n,m) = (1,m)$ and when $(n,m) = (2,1)$
for a particular class of braids. A complete proof of the $(n,1)$ case
for $n = 2,3$ is given in \cite{Ko}. However this is achieved by
studying the invariants at hand at the level of
\emph{representations}, which requires computation of an explicit
$R$-matrix for each $LG^{n,1}$, making that method hard to implement as
$n$ grows.

Here we prove the $(n,1)$ case of the conjecture for any $n$:
$$
LG^{n,1}(L ; \tau,-1) = \Delta_L(\tau^{2})^n.
$$
To do so we study the structure of the \emph{universal} objects
directly, and in particular the (super) Hopf algebras and universal
R-matrices that are involved.

However, the strong version of the conjecture is still open.

\section{Hopf algebras for the Alexander polynomial}
We first define a Hopf algebra $U$ which is an essential ingredient
for the quantum relatives of the Alexander polynomial.  Unfortunately
this algebra is only braided in a weak sense.  Then we recall two 
quantum groups which can be seen as central extensions
of $U$. One was first used by Murakami \cite{jM}, both were
studied by Viro in \cite{Vi}. Finally we compare the braidings of the two Hopf algebras.
\subsection{A braided Hopf algebra $U$}\label{S:U} 
\newcommand{\ii}{\mathbf{i}}
The following Hopf algebra $U$ is a version of quantum $\slt$ when
the quantum parameter $q$ is a fourth root $\ii$ of $1$.  The complex algebra $U$ is presented by generators $k^{\pm1},e,f$ and relations
$$ke+ek=kf+fk=e^2=f^2=0\et ef-fe=k-k^{-1}.$$
The coproduct, counity and antipode of $U$ are given by 
\begin{align*}
  ∆(e)&= 1⊗ e + e⊗ k, 
  &\varepsilon(e)&= 0, 
  &S(e)&=-ek^{-1}, 
  \\
  ∆(f)&=k^{-1} ⊗ f + f⊗ 1,  
  &\varepsilon(f)&=0,& S(f)&=-kf,
    \\
  ∆(k)&=k⊗ k,
  &\varepsilon(k)&=1,
  & S(k)&=k^{-1}.
\end{align*}
This Hopf algebra can be seen in a sense as a a "double" of Bodo Pareigis' Hopf algebra \cite{Par} that would be $<k,f>$ with our notations.
A pivotal structure is a group like element $\phi$ whose conjugation
is equal to the square of the antipode.  There is non obviously a better
choice which is given by $\phi=k^{-1}$.

Let $\tau :x⊗y\mapsto y⊗x$ be the switch of factors. Hopf
algebra $U$ is not quasi-triangular but it is braided in the sense of
\cite{R}: there exists an (outer) algebra automorphism $\cR:U⊗
U\to U⊗ U$ different from $\tau$ that satisfies
\begin{align}
  \cR\circ\Delta&=\tau\circ\Delta,\\
  \Delta_1\circ\cR&=\cR_{13}\cR_{23}\label{eq:deltaR1},\\
  \Delta_2\circ\cR&=\cR_{13}\cR_{12}\label{eq:deltaR2}.
\end{align}

Automorphism $\cR$ admits a regular splitting (see \cite{R}) $\cR=\D\circ\Ad_{\check R}$ where $\Ad_{\check R}$ is the conjugation by the invertible element 
$$\check R=1+e⊗f$$
and $\D$ is an outer automorphism satisfying equations similar to \eqref{eq:deltaR1} and \eqref{eq:deltaR2} and defined by:
$$\D\circ\tau=\tau\circ\D\ets\D(e⊗1)=e⊗k\ets\D(f⊗1)=f⊗k^{-1}\et\D(k⊗1)=k⊗1.$$
The elements $k^{\pm2}$ generate a central sub-Hopf algebra and for
any $g\in\C\setminus\{0,1\}$, the quotient $U/(k^2-g)$ is a
8-dimensional semi-simple Hopf algebra with two isomorphism classes of
irreducible representations $V_{\pm a}$ where $a^2=g$. The
representation $V_a$ is 2-dimensional and can be written in a certain basis $(e_0, e_1)$
\begin{equation}
  \label{eq:repU}
k = \begin{pmatrix}
   a & 0 \\
   0 & -a
\end{pmatrix} ,
e = \begin{pmatrix}
   0 & 1 \\
   0 & 0
\end{pmatrix} ,
f = \begin{pmatrix}
   0 & 0 \\
   a-\frac1a & 0
\end{pmatrix} .
\end{equation}
The central element $ef + fe$ acts by $(a-a^{-1}) I_2$.
\subsection{The $\slt$ model and the Alexander polynomial}\label{S:QUantSL2H} 
From \cite{Vi,CGP3} the $\slt$ model is the unrolled version of
quantum $\slt$ at $q=\ii=\exp(\ii\pi/2)$.  It is an algebra $\UH$ generated by $K^{\pm1},E,F,H$.  Its presentation is obtained from that of $U$ ($U_0=\brk{K^{\pm1},E,F}\simeq U$) by adding the generator $H$ and the following relations:
$$[H,K]=0\ets[H,E]=2E\ets[H,F]=-2F.$$
We will consider the category $\cat$ of weight modules: finite
dimensional vector spaces where element $H$ acts diagonally
and 
\begin{equation}
  \label{eq:KH}
  K=\ii^H=\exp(\ii \pi H/2).
\end{equation}
The pivotal Hopf algebra structure $U$ is extended to $\UH$ by the following relations: \footnote{Compared to Viro, we use
  the opposite coproduct here.}
\begin{align*}
  ∆(H)&=1⊗ H+H⊗ 1
  &\varepsilon(H)&=0,
  & S(H)&=-H.
\end{align*}

As in $U$, the pivotal element is $\Phi=K^{-1}$ so that
$S^2(\cdot)=\Phi\cdot\Phi^{-1}$.  With this pivotal structure,
category $\cat$ is ribbon with braiding given by the switch
$\tau:x⊗y\mapsto y⊗x$ composed with the action of the universal
$R$-matrix:
$$\RH=\ii^{H⊗H/2}(1+E⊗F).$$
\begin{lemma}\label{L:DH}
  For any two representations $V,W\in\cat$, the conjugation in $V⊗W$ by
  $D^H := \ii^{{H⊗H/2}}$ induces an
  automorphism $\D^H$ of $\End_\C(V⊗W)$ which satisfies
  $$\rho_{V⊗W}\circ\D=\D^H\circ\rho_{V⊗W}:U⊗U\to\End_\C(V⊗W).$$
\end{lemma}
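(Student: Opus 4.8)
The plan is to reduce the statement to a one-line computation on weight vectors. First, $\D^H$ is well defined: on any object of $\cat$ the element $H$ acts diagonally, so $D^H=\ii^{H\otimes H/2}=\exp\!\bigl(\tfrac{\ii\pi}{4}\,H\otimes H\bigr)$ is a well-defined \emph{invertible} operator on $V\otimes W$, and conjugation by an invertible operator is always an algebra automorphism of $\End_\C(V\otimes W)$; this is $\D^H$. Since $\rho_{V\otimes W}\colon U\otimes U\to\End_\C(V\otimes W)$ is the external tensor product of $\rho_V$ and $\rho_W$, hence an algebra homomorphism, both $\rho_{V\otimes W}\circ\D$ and $\D^H\circ\rho_{V\otimes W}$ are algebra homomorphisms $U\otimes U\to\End_\C(V\otimes W)$. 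It therefore suffices to check that they agree on a generating set of the algebra $U\otimes U$, say on $e\otimes1,\ f\otimes1,\ k^{\pm1}\otimes1$ and $1\otimes e,\ 1\otimes f,\ 1\otimes k^{\pm1}$.

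Next I would halve the work using symmetry. The operator $D^H$ is fixed by the flip $T\colon V\otimes W\to W\otimes V$ (because $H\otimes H$ is symmetric), the automorphism $\D$ satisfies $\D\circ\tau=\tau\circ\D$ by construction, and $\rho_{W\otimes V}\circ\tau$ equals conjugation by $T$ composed with $\rho_{V\otimes W}$. Combining these, once the identity $\rho_{V\otimes W}\circ\D=\D^H\circ\rho_{V\otimes W}$ is verified on the first‑factor generators $e\otimes1,\ f\otimes1,\ k^{\pm1}\otimes1$ for \emph{all} pairs $V,W\in\cat$, it follows on $1\otimes e,\ 1\otimes f,\ 1\otimes k^{\pm1}$ as well, so only these three generators remain.

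For the remaining check, decompose $V=\bigoplus_\lambda V_\lambda$ and $W=\bigoplus_\mu W_\mu$ into $H$-weight spaces, so that $D^H$ acts on $V_\lambda\otimes W_\mu$ by the scalar $\ii^{\lambda\mu/2}$. Using $[H,E]=2E$, $[H,F]=-2F$, $[H,K]=0$, and $K=\ii^H$: the operator $\rho_V(k^{\pm1})$ preserves $H$-weights, so it commutes with $D^H$; and the weight shift by $\pm2$ on the first factor produced by $\rho_V(e)$ and $\rho_V(f)$ makes conjugation by $D^H$ introduce, on the second factor, exactly the scalar $\ii^{\pm\mu}$ on $W_\mu$, i.e.\ the operator $\ii^{\pm H_W}=\rho_W(k^{\pm1})$. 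Hence
\begin{align*}
  \D^H\!\bigl(\rho_{V\otimes W}(e\otimes1)\bigr)&=\rho_V(e)\otimes\rho_W(k)=\rho_{V\otimes W}(e\otimes k),\\
  \D^H\!\bigl(\rho_{V\otimes W}(f\otimes1)\bigr)&=\rho_V(f)\otimes\rho_W(k^{-1})=\rho_{V\otimes W}(f\otimes k^{-1}),\\
  \D^H\!\bigl(\rho_{V\otimes W}(k^{\pm1}\otimes1)\bigr)&=\rho_{V\otimes W}(k^{\pm1}\otimes1).
\end{align*}
By the definition of $\D$, the right‑hand sides are $\rho_{V\otimes W}(\D(e\otimes1))$, $\rho_{V\otimes W}(\D(f\otimes1))$ and $\rho_{V\otimes W}(\D(k^{\pm1}\otimes1))$, which proves the identity on generators, hence in general.

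I do not anticipate a genuine obstacle: this is a bookkeeping lemma. The only mildly delicate point is the well‑definedness of $D^H$ for modules whose $H$‑weights are non‑integral complex numbers, handled by the exponential formula above. The substance is the familiar fact that the Cartan part $\ii^{H\otimes H/2}$ of the $R$-matrix $\RH$ conjugates $E\otimes1$ to $E\otimes K$ and $F\otimes1$ to $F\otimes K^{-1}$ while commuting with $K^{\pm1}\otimes1$ — which is precisely the rule defining the abstract automorphism $\D$ of $U\otimes U$.
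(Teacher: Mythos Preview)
Your proof is correct and follows essentially the same approach as the paper's: both use that $E,F$ shift $H$-weights by $\pm2$ together with $K=\ii^{H}$ to compute the conjugation by $\ii^{H\otimes H/2}$. The paper carries this out in one stroke for arbitrary $H$-weight elements $x,y\in U$ (with $[H,x]=2mx$, $[H,y]=2m'y$), whereas you check on algebra generators and invoke the symmetry in $\tau$, but the substance is identical.
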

\begin{proof}
  This is an easy consequence of Equation \eqref{eq:KH}. More
  generally, if $x,y\in U$ satisfy $[H,x]=2mx$ and $[H,y]=2m'y$, then
  ${H⊗H}.{x⊗y}= {x⊗y}.(H+2m)⊗(H+2m')$ so\\
  $
  \begin{array}{rl}
    \ii^{\rho_{V⊗W}({H⊗H}/2)}\rho_{V⊗W}({x⊗y})&=
    \rho_{V⊗W}({x⊗y})\ii^{\rho_{V⊗W}({(H+2m)⊗(H+2m')}/2)}\\
    &=\rho_{V⊗W}((x⊗K^m)(K^{m'}⊗y))\ii^{\rho_{V⊗W}({H⊗H}/2)}\\
    &=\rho_{V⊗W}(\D({x⊗y}))\ii^{\rho_{V⊗W}({H⊗H}/2)}.
  \end{array}$
\end{proof}
For each complex number $\alpha$ which is not an odd integer, $\UH$
possesses, up to isomorphism, a unique two dimensional irreducible
representation $V_\alpha$ with Spec$(H)=\{\alpha+1,\alpha-1\}$.  Its
restriction to $U$ is representation $V_a$ where
$a=\ii^{\alpha+1}$ and the action of $H$ is given by $ H
= \begin{pmatrix}
  \alpha +1 & 0 \\
  0 & \alpha -1
\end{pmatrix} .
$

In the representation $V_\alpha ⊗ V_\beta$, with respect to
basis $(e_0 ⊗ e_0, e_0 ⊗ e_1, e_1 ⊗ e_0, e_1 ⊗
e_1)$ the braiding is:

$$\textbf{i}^{\frac{\alpha \beta -1}{2}}\begin{pmatrix}
   \textbf{i}^{\frac{\alpha + \beta + 2}{2}} & 0 & 0 & 0 \\
   0 & 0 & \textbf{i}^{\frac{\alpha - \beta}{2}} & 0 \\
   0 & \textbf{i}^{\frac{-\alpha + \beta}{2}} & \textbf{i}^{\frac{-\alpha + \beta}{2}} (\textbf{i}^{\beta +1} - \textbf{i}^{-\beta -1}) & 0 \\
   0 & 0 & 0 & \textbf{i}^{\frac{-\alpha - \beta + 2}{2}}
\end{pmatrix}.$$
In the case where $\alpha = \beta$, the $R$-matrix then takes the particular form

$$\tau \RH = \textbf{i}^{\frac{\alpha^2 -1}{2}}\begin{pmatrix}
  t^{-1/2} & 0 & 0 & 0 \\
   0 & 0 & 1 & 0 \\
   0 & 1 & (t^{-1/2} - t^{1/2}) & 0 \\
   0 & 0 & 0 & -t^{1/2}
\end{pmatrix}$$
where we set $t^{1/2} = \textbf{i}^{-\alpha-1}$. 

The ribbon category we consider here allows us to apply the
Reshetikhin-Turaev theory \cite{RT0} to construct a framed link
isotopy invariant in $S^3$.  It becomes an unframed link isotopy
invariant if one divides the above $R$-matrix on $V_\alpha ⊗
V_\alpha$ by the value of the twist
$\theta_\alpha=\textbf{i}^{\frac{\alpha^2 -1}{2}}$.  In this
particular case, the invariant we find is the Conway normalization of
the classical Alexander polynomial, see \cite{Vi}. The
Links-Gould invariants $LG^{n,1}$ that will interest us in the
following are obtained by the same general construction using other
Hopf algebras.  Explicitly, the Reshetikhin-Turaev functor gives
representations of braid groups $B_\ell$
$$
\begin{array}{rcl}
  \Psi_{V_\alpha^{⊗ \ell}} : B_\ell &\longrightarrow &GL(V_\alpha^{⊗ \ell})\\
  \sigma_i&\mapsto&\Id_{V_\alpha}^{⊗{i-1}} ⊗ \theta_\alpha^{-1}\tau 
  \RH ⊗ \Id_{V_\alpha}^{⊗{\ell-i-1}},
\end{array}
$$ 
where $\sigma_i$ is the $i^{\text{th}}$ standard Artin generator of braid group $B_\ell$.

Let $L$ be an oriented link in $S^3$ obtained as  closure of a braid in $\ell$ strands $b \in B_\ell$. Then: \newline
1) There exists a scalar $c$ such that $\trace_{2,3,...,\ell}((\Id_{V_\alpha} ⊗ (K^{-1})^{⊗ \ell-1}) \circ \Psi_{V_{\alpha}^{⊗{\ell}}}(b))= c.\Id_{V_\alpha}$,\newline
2) $L\mapsto c$ is a link invariant and is equal to the Alexander polynomial of $L$, $\Delta_L(t)$.

\begin{rem}
Identifying algebras $\End_{\mathbb{C}}(V_\alpha^{⊗ \ell})$ and $\End_{\mathbb{C}}(V_\alpha)^{⊗ \ell}$, the partial trace operator is defined by $\trace_{2,3,...,\ell}(f_1 ⊗ ... ⊗ f_\ell) := \trace(f_2) \trace(f_3) ... \trace(f_\ell) f_1 \in \End_{\mathbb{C}}(V_\alpha)$ for any $f_1, ..., f_\ell \in \End_{\mathbb{C}}(V_\alpha)$.
\end{rem}

\subsection{An example of bosonization: the $\gl(1|1)$ model}\label{S:bosonisation} 
\subsubsection{Bosonization}
Here we recall Majid's trick \cite{Ma} to transform a super Hopf
algebra into an ordinary one.

Let $H$ be a pivotal super Hopf algebra and $\cat$ be its even monoidal
category of representations (morphisms are formed by even $H$-linear maps).  Let $H^𝜎$ be the bosonization of $H$: as an algebra, $H^𝜎$ is the semi-direct product of $H$ with
$\Z/2\Z=\{1,𝜎\}$ where the action of $𝜎$ or equivalently the
commutation relations in $H^𝜎$ are given by 
$$\forall x\in H,\,𝜎 x=(-1)^{|x|}x𝜎.$$
The coproduct $∆^𝜎$ on $H^𝜎$ is given by $∆^𝜎
𝜎=𝜎⊗ 𝜎$ and
$$\forall x\in H,\, ∆^𝜎(x)=\sum_i x_i𝜎^{|x_{i}'|}
⊗ x_i'\text{ where }∆(x)=\sum_i x_i⊗ x_i'.$$

If ${R} = \sum_i {R}_i^{(1)} ⊗ {R}_i^{(2)}$ is the universal $R$-matrix in $H$, then the following formula defines a universal $R$-matrix in $H^{\sigma}$:
$$
{R}^{\sigma} = {R}_1 \sum_i {R}_i^{(1)} \sigma^{|{R}_i^{(2)}|} ⊗ {R}_i^{(2)} \text{, where } {R}_1 = \frac{1}{2}(1 ⊗ 1 + \sigma ⊗ 1 + 1 ⊗ \sigma - \sigma ⊗ \sigma).
$$

Given a super representation $V=V_{\wb0}\oplus V_{\wb1}$ of $H$ we get
a representation of $H^𝜎$ by setting
$𝜎_{|V}=\Id_{V_{\wb0}}-\Id_{V_{\wb1}}$. Reciprocally, since
$𝜎^2=1$, every $H^𝜎$-module inherits a natural $\Z/2Z$ grading:
$W$ splits into $W=W_{\wb0}\oplus W_{\wb1}$ where we define
$W_{\wb0}=\ker(𝜎-1)$ and $W_{\wb1}=\ker(𝜎+1)$.
\begin{theo} [Majid Theorem 4.2] The even category 
  of super $H$-modules can be identified with the category of
  $H^𝜎$-modules.
\end{theo}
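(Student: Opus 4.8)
The plan is to unwind the definitions into an explicit pair of mutually inverse functors and then check compatibility with the monoidal (and, if desired, braided/ribbon) structures. In the direction from super $H$-modules to $H^\sigma$-modules, send a super module $V=V_{\wb0}\oplus V_{\wb1}$ to the same underlying vector space, with $H$ acting as before and $\sigma$ acting by $\Id_{V_{\wb0}}-\Id_{V_{\wb1}}$; the relation $\sigma x=(-1)^{|x|}x\sigma$ holds because a homogeneous $x$ shifts the grading by $|x|$, hence anticommutes with $\Id_{V_{\wb0}}-\Id_{V_{\wb1}}$ exactly when $|x|=\wb1$. In the other direction, given an $H^\sigma$-module $W$, the relation $\sigma^2=1$ lets us split $W=W_{\wb0}\oplus W_{\wb1}$ with $W_{\wb0}=\ker(\sigma-1)$ and $W_{\wb1}=\ker(\sigma+1)$, and the commutation relation forces every homogeneous $x\in H$ to send $W_{\wb i}$ into $W_{\wb i+|x|}$, so $W$ becomes a super $H$-module. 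These assignments are visibly inverse on objects. On morphisms: an even $H$-linear $f\colon V\to V'$ commutes with $\Id_{V_{\wb0}}-\Id_{V_{\wb1}}$, hence with $\sigma$, so it is $H^\sigma$-linear; conversely an $H^\sigma$-linear map commutes with $\sigma$, hence preserves its eigenspaces, so it is even. Thus the two functors give an equivalence of categories.

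It remains to see the equivalence is strong monoidal, and this is exactly what the formula for $\Delta^\sigma$ is designed to do. On super modules $V,V'$ the tensor product has $x$ act by $x(v\otimes v')=\sum_i(-1)^{|x_i'|\,|v|}\,(x_iv)\otimes(x_i'v')$ for homogeneous $v$, where $\Delta(x)=\sum_i x_i\otimes x_i'$. Transporting to $H^\sigma$-modules, the factor $\sigma^{|x_i'|}$ in $\Delta^\sigma(x)=\sum_i x_i\sigma^{|x_i'|}\otimes x_i'$ acts on a homogeneous vector $w$ in the first slot by the scalar $(-1)^{|x_i'|\,|w|}$, so $\Delta^\sigma(x)$ reproduces precisely the Koszul-signed action above; and $\Delta^\sigma(\sigma)=\sigma\otimes\sigma$ installs on $V\otimes V'$ the total grading $|v|+|v'|$, which is the grading of the super tensor product. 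The associativity and unit constraints are identity maps of underlying vector spaces on both sides, so they agree trivially. Hence the functors are strong monoidal, and by the same bookkeeping applied to the formula for $R^\sigma$, to the pivot, and to the ribbon element, the identification is also braided, pivotal and ribbon whenever $H$ is.

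The only genuinely delicate point is the sign accounting in the previous paragraph: one must keep straight that the exponent of $\sigma$ in each leg of $\Delta^\sigma(x)$ is the parity of the \emph{other} leg's Sweedler component, so that the $\sigma$-factors conjugate the switch $\tau$ into the Koszul rule rather than its inverse. This same check is what makes $\Delta^\sigma$ coassociative and multiplicative, i.e.\ what makes $H^\sigma$ a genuine Hopf algebra; once the grading conventions are fixed it is routine, and no other step presents real difficulty.
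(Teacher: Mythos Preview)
The paper does not give its own proof of this statement: it is quoted as Majid's Theorem~4.2 (reference \cite{Ma}) and left without argument. Your proposal is a correct and essentially standard proof of the bosonization equivalence---defining the two mutually inverse functors, checking they match on morphisms via commutation with $\sigma$, and then verifying monoidality by observing that the factor $\sigma^{|x_i'|}$ in $\Delta^\sigma$ reproduces the Koszul sign rule on tensor products. This is the same content as Majid's original argument.

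One small imprecision worth fixing: you write that ``the exponent of $\sigma$ in each leg of $\Delta^\sigma(x)$ is the parity of the \emph{other} leg's Sweedler component,'' but in the convention used here only the \emph{first} tensor factor carries a $\sigma$ (with exponent $|x_i'|$); the second factor $x_i'$ is bare. Your subsequent computation uses the correct formula, so this is only a slip in the prose, not in the mathematics.
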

Remark that the antipode of $H^𝜎$ is given by $x\mapsto 𝜎^{|x|}S(x)$
and if $H$ as a pivot $\phi$ then one can choose $\phi^𝜎=𝜎\phi$ as a pivot in $H^𝜎$.
\subsubsection{The $\gl(1|1)$ model}
\newcommand{\KK}{C}
\newcommand{\EE}{I}
Using the same notations as Viro: $U_q\gl(1|1)$ is the pivotal super Hopf algebra generated by two
odd generators $X, Y$, two even generators ${\EE}$, $G$ satisfying the
relations
$$XY+YX =\dfrac{{\KK} - {\KK}^{-1}}{q-q^{-1}}\ets X^2 = Y^2 = 0,$$
$$[{\EE}, X] = [{\EE}, Y ] = [{\EE}, G ] = 0,$$
$$[G, X] = X\ets[G, Y ] = -Y,$$
where ${\KK} = q^{\EE}$, with coproduct
$$∆({\EE}) = 1 ⊗ {\EE} + {\EE} ⊗ 1\ets ∆(G) = 1 ⊗ G + G ⊗ 1,$$
$$∆(X) = X ⊗ {\KK}^{-1} + 1 ⊗ X\ets ∆(Y) = Y ⊗ 1 + {\KK} ⊗ Y,$$
counit
$$
\varepsilon(X) = \varepsilon(Y) = \varepsilon({\EE}) = \varepsilon(G) = 0,
$$
antipode
$$
S({\EE}) = -{\EE} \text{,  }S(G) = -G \text{,  }S(X) = -X{\KK} \text{,  }S(Y) = -Y {\KK}^{-1},
$$
pivot $$\phi=K$$
and universal $R$-matrix
$$R = (1 + (q - q^{-1} )(X ⊗ Y )({\KK} ⊗ {\KK}^{-1} )) q^{-{\EE}⊗G-G⊗{\EE}}.$$

Its bosonization $U_q\gl(1|1)^𝜎$ contains a sub-Hopf algebra $U_1$
isomorphic to $U$ given by 
$$e=(q - q^{-1})X \sigma\ets f = Y\et k ={\KK}^{-1} \sigma.$$ 
Indeed, these elements satisfy the following:
$$ef-fe=(q - q^{-1})(X \sigma Y - Y X \sigma)=(q - q^{-1})(-XY-YX)𝜎=k - k^{-1},$$
$$ke+ek=kf+fk=0,$$
$$∆^𝜎(e)=(q - q^{-1}) \Delta^{\sigma}(X \sigma) = 
(q - q^{-1}) (X ⊗ {\KK}^{-1} + \sigma ⊗ X) (\sigma ⊗ \sigma) = e ⊗ k + 1 ⊗ e ,$$
$$∆^𝜎(f)= \Delta^{\sigma}(Y) = Y ⊗ 1 + {\KK} \sigma ⊗ Y = f ⊗ 1 + k^{-1} ⊗ f ,$$
$$\Delta^{\sigma}(k) = k ⊗ k.$$
In the bosonization, the universal $R$-matrix is 
$$
R^{\sigma} = R_1 q^{-({\EE} ⊗ G + G ⊗ {\EE})} (1+ e ⊗ f) \text{, where } R_1 = \frac{1}{2}(1 ⊗ 1 + \sigma ⊗ 1 + 1 ⊗ \sigma - \sigma ⊗ \sigma).
$$
\begin{lemma}\label{lem}
  Denoting 
  $D' = q^{- {\EE} ⊗ G - G ⊗ {\EE}}$ and $D^{\sigma} = R_1
  D'$ we have, for any $x,y \in U=U_1$:
$$ R_1 (x ⊗ y) R_1^{-1} = \sigma^{|y|} x ⊗ y \sigma^{|x|}\ets
D' (x ⊗ y) (D')^{-1} = x {\KK}^{-d_G(y)} ⊗ y {\KK}^{-d_G(x)},$$
$$D^{\sigma} (x ⊗ y) (D^{\sigma})^{-1} = ({\KK}^{-1} \sigma)^{d_G(y)} x ⊗ y ({\KK}^{-1} \sigma)^{d_G(x)}=\D(x⊗ y),$$
where 
$d_G(x) \in \mathbb{Z}$ is defined by $[G,x] = d_G(x) x$.
\end{lemma}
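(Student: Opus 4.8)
The plan is to establish the three displayed identities in turn, for $x,y$ homogeneous both for the $\Z/2\Z$-grading $|\cdot|$ on $U_1$ (for which $e,f$ are odd and $k$ even, equivalently $z\sigma=(-1)^{|z|}\sigma z$) and for the adjoint $G$-action $d_G$; the general case follows by linearity, and — as $\RH$ and $D^H$ are in Lemma~\ref{L:DH} — the identities involving $D'$ and $D^\sigma$ are understood in a suitable completion, or equivalently after applying $\rho_{V\otimes W}$ to a pair of weight modules, whereas the $R_1$ identity is an honest identity in $\bigl(U_q\gl(1|1)^\sigma\bigr)^{\otimes2}$. Two bookkeeping facts will be used throughout. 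First, on generators $d_G(e)=|e|=1$, $d_G(f)=-1\equiv|f|=1$, $d_G(k)=0=|k|$, and since both $|\cdot|$ and $d_G$ are additive on products, $|z|\equiv d_G(z)\pmod2$ for every homogeneous $z\in U_1$. Second, $k=\KK^{-1}\sigma$ with $\KK=q^{\EE}$ even and central in $U_1$; in particular $\KK$ commutes with $\sigma$ and with $x,y$, and $d_G(\KK)=0$.

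For the first identity I would work with the central idempotents $p_\pm=\tfrac12(1\pm\sigma)$ and note $R_1=p_+\otimes1+p_-\otimes\sigma$, so that $R_1^2=1\otimes1$ and $R_1^{-1}=R_1$. Using $x p_\epsilon=p_{\epsilon(-1)^{|x|}}x$, a direct expansion of $R_1(x\otimes y)R_1$, distinguishing $|x|=0$ from $|x|=1$, collapses to $R_1(x\otimes y)R_1^{-1}=\sigma^{|y|}x\otimes y\sigma^{|x|}$.

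For the second identity, write $D'=q^{-\EE\otimes G}\,q^{-G\otimes\EE}$, the two factors commuting because $\EE$ is central. From centrality of $\EE$ and $Gy=y(G+d_G(y))$ one gets $(\EE\otimes G)(x\otimes y)=(x\otimes y)\bigl(\EE\otimes G+d_G(y)\,\EE\otimes1\bigr)$, and since $\EE\otimes G$ commutes with $\EE\otimes1$ this exponentiates to $q^{-\EE\otimes G}(x\otimes y)q^{\EE\otimes G}=(x\otimes y)(\KK^{-d_G(y)}\otimes1)=x\KK^{-d_G(y)}\otimes y$; the mirror computation gives $q^{-G\otimes\EE}(x\otimes y)q^{G\otimes\EE}=x\otimes y\KK^{-d_G(x)}$. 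Composing the two — and using $d_G(\KK)=0$, so the intermediate element retains its $d_G$-degree — yields $D'(x\otimes y)(D')^{-1}=x\KK^{-d_G(y)}\otimes y\KK^{-d_G(x)}$.

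Finally, for $D^\sigma=R_1D'$ I would compose: $\Ad_{D^\sigma}=\Ad_{R_1}\circ\Ad_{D'}$, and since $x\KK^{-d_G(y)}$ and $y\KK^{-d_G(x)}$ have the same parities as $x$ and $y$, the first two identities give $D^\sigma(x\otimes y)(D^\sigma)^{-1}=\sigma^{|y|}x\KK^{-d_G(y)}\otimes y\KK^{-d_G(x)}\sigma^{|x|}$. Now $|y|\equiv d_G(y)\pmod2$ and $\sigma^2=1$ give $\sigma^{|y|}=\sigma^{d_G(y)}$, and since $\KK$ commutes with $\sigma$ and with $x,y$ one rewrites $\sigma^{|y|}x\KK^{-d_G(y)}=(\KK^{-1}\sigma)^{d_G(y)}x=k^{d_G(y)}x$ and $y\KK^{-d_G(x)}\sigma^{|x|}=y(\KK^{-1}\sigma)^{d_G(x)}=yk^{d_G(x)}$, so $D^\sigma(x\otimes y)(D^\sigma)^{-1}=k^{d_G(y)}x\otimes yk^{d_G(x)}$. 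Evaluating this on $e\otimes1$, $f\otimes1$, $k\otimes1$ and, via $\tau$, on $1\otimes e$, $1\otimes f$, $1\otimes k$ gives exactly $e\otimes k$, $f\otimes k^{-1}$, $k\otimes1$, $k\otimes e$, $k^{-1}\otimes f$, $1\otimes k$, which are the defining values of $\D$ from Section~\ref{S:U}; as $\Ad_{D^\sigma}$ (which preserves $U_1\otimes U_1$ by the explicit formula) and $\D$ are both algebra automorphisms of $U\otimes U=U_1\otimes U_1$ agreeing on a generating set, they coincide. There is no real obstacle here — the content is purely bookkeeping — and the one point requiring care, which is also the crux of the last identity, is that the congruence $|z|\equiv d_G(z)\pmod2$ forces the $R_1$-correction $\sigma^{\pm|\cdot|}$ and the $D'$-correction $\KK^{\mp d_G}$ to assemble into powers of the single group-like element $k=\KK^{-1}\sigma$, which is precisely what reproduces $\D$.
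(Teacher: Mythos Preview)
Your proof is correct and complete. The paper itself does not supply a proof for this lemma, treating it as a routine verification; the key observation you isolate --- that $|z|\equiv d_G(z)\pmod2$ for homogeneous $z\in U_1$, so that the $\sigma$-correction from $R_1$ and the $\KK$-correction from $D'$ combine into powers of $k=\KK^{-1}\sigma$ --- is exactly what the paper records as the Remark immediately following the lemma, so your argument is the intended one spelled out in full.
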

\begin{rem}
For a homogeneous $a \in U_0$, $|a| = d_G(a)$ modulo $2$.
\end{rem}
Let us recall a family of $2$-dimensional
$U_q\gl(1|1)^{\sigma}$-modules. This family is parametrized by two
complex numbers $(j,J)$ and 
$\varepsilon \in \{{0}, {1}\}$, 
see \cite{Vi}.  It extends the representation $V_a$ of $U_1$ where
$a=(-1)^{\varepsilon} q^{-2j}$.  Written in matrix form,
$$
\EE = \begin{pmatrix}
   2 j & 0 \\
   0 & 2j
\end{pmatrix} ,
G = \begin{pmatrix}
   \frac{J+1}{2} & 0 \\
   0 & \frac{J-1}{2}
\end{pmatrix} ,
$$
$$
X = \begin{pmatrix}
   0 & \frac{q^{2j}-q^{-2j}}{q-q^{-1}} \\
   0 & 0
\end{pmatrix} ,
Y = \begin{pmatrix}
   0 & 0 \\
   1 & 0
\end{pmatrix} ,
\sigma = \begin{pmatrix}
   (-1)^{\varepsilon} & 0 \\
   0 & -(-1)^{\varepsilon}
\end{pmatrix} .
$$

\subsection{Comparing the actions of $R^{\sigma}$ and $\RH$}
$U_0 \subset \UH$ and $U_1 \subset U_q\gl(1|1)^{\sigma}$ are two isomorphic Hopf algebras. The goal of this paragraph is to show the action of 
$$\RH=\ii^{H⊗H/2}(1+E⊗F) \in \UH ⊗ \UH$$
and that of
$$R^{\sigma} = R_1 q^{-({\EE} ⊗ G + G ⊗ {\EE})} (1+ e ⊗ f) \in U_q\gl(1|1)^{\sigma} ⊗ U_q\gl(1|1)^{\sigma}$$
on two representations $V_1^H ⊗ V_2^H$ of $\UH$ and $V_1^{\sigma} ⊗ V_2^{\sigma}$ of $U_q\gl(1|1)^{\sigma}$ are identical up to a scalar multiple of the identity, when $V_i^H$ and $V_i^{\sigma}$ have the same underlying $U_0 = U_1$-module structure.

  We recall conjugations by the elements $D^H$ on one side and $D^{\sigma}$ on the other
  side both induce the same automorphism $\mathcal{D}$ of $U ⊗ U$.

\begin{prop}\label{cor}
Set for $i = 1,2$ $V_i^H$ a representation of $\UH$ and $V_i^{\sigma}$ a representation of $U_q\gl(1|1)^{\sigma}$ which both restrict to the same irreducible representation of $U=U_0 = U_1$. Then $D^H (D^{\sigma})^{-1} \in \End_{\mathbb{C}}(V_1 ⊗ V_2)$ is a scalar multiple of the identity. 
\end{prop}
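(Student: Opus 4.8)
The plan is to show that the operator $D^H(D^\sigma)^{-1}$ commutes with the whole image of $U\otimes U$ in $\End_\C(V_1\otimes V_2)$, and then to conclude by irreducibility (Burnside's theorem, i.e.\ Schur's lemma over $\C$).

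First I would set up notation. Write $\rho_i\colon U\to\End_\C(V_i)$ for the action of $U=U_0=U_1$ on the common underlying module $V_i$, and $\rho=\rho_1\otimes\rho_2\colon U\otimes U\to\End_\C(V_1\otimes V_2)$ for the resulting (external tensor product) $U\otimes U$-action. By hypothesis this map $\rho$ is one and the same whether $V_1\otimes V_2$ is read off from the $\UH$-module $V_1^H\otimes V_2^H$ (restricted to $U_0\otimes U_0$) or from the $U_q\gl(1|1)^\sigma$-module $V_1^\sigma\otimes V_2^\sigma$ (restricted to $U_1\otimes U_1$). Lemma~\ref{L:DH} applied to $V_1^H\otimes V_2^H$ gives $\Ad_{D^H}\circ\rho=\rho\circ\D$, and Lemma~\ref{lem} applied to $V_1^\sigma\otimes V_2^\sigma$ gives $\Ad_{D^\sigma}\circ\rho=\rho\circ\D$; here $D^H=\ii^{H\otimes H/2}$ and $D^\sigma=R_1\,q^{-\EE\otimes G-G\otimes\EE}$ act invertibly on $V_1\otimes V_2$ (for $D^\sigma$ because $R_1$ squares to the identity and $q^{-\EE\otimes G-G\otimes\EE}$ is invertible). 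Composing the two identities, $\Ad_{D^H(D^\sigma)^{-1}}\circ\rho=\Ad_{D^H}\circ\Ad_{(D^\sigma)^{-1}}\circ\rho=\Ad_{D^H}\circ\rho\circ\D^{-1}=\rho\circ\D\circ\D^{-1}=\rho$, so $D^H(D^\sigma)^{-1}$ commutes with every element of $\rho(U\otimes U)$.

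It then remains to identify $\rho(U\otimes U)$. The modules $V_1,V_2$ are finite-dimensional (they underlie objects of $\cat$, resp.\ are finite-dimensional irreducibles over the $8$-dimensional-on-each-central-fibre algebra $U$) and irreducible over $U$, and $\C$ is algebraically closed, so Burnside's theorem gives $\rho_i(U)=\End_\C(V_i)$; hence $\rho(U\otimes U)$ contains every $\rho_1(a)\otimes\rho_2(b)$ and therefore equals $\End_\C(V_1)\otimes\End_\C(V_2)=\End_\C(V_1\otimes V_2)$. An operator commuting with the full matrix algebra $\End_\C(V_1\otimes V_2)$ lies in its centre, hence is a scalar multiple of the identity, which is exactly the assertion. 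The argument is purely formal given Lemmas~\ref{L:DH} and~\ref{lem}; I expect no real obstacle, only two points of hygiene: checking that the $U\otimes U$-action on $V_1\otimes V_2$ genuinely coincides on the two sides (which is why the hypothesis must be that $V_i^H$ and $V_i^\sigma$ restrict to the \emph{same} $U$-module, not merely isomorphic ones), and noting that it is irreducibility of each factor $V_i$ — equivalently, of $V_1\otimes V_2$ as a $U\otimes U$-module — that collapses the commutant to the scalars.
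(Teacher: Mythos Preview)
Your proof is correct and follows essentially the same approach as the paper: both argue that conjugation by $D^H$ and by $D^\sigma$ induce the same automorphism $\D$ of $U\otimes U$ on the common restriction, deduce that $D^H(D^\sigma)^{-1}$ (equivalently, $(D^H)^{-1}D^\sigma$) commutes with $\rho(U\otimes U)$, and then invoke the density/Burnside theorem to conclude it lies in the centre of $\End_\C(V_1\otimes V_2)$. Your added remarks on why one needs the restrictions to be literally the same $U$-module and why irreducibility of each $V_i$ is the relevant hypothesis are accurate and make the hygiene explicit.
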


\begin{proof}
The density theorem states that if $V$ is a finite dimensional irreducible representation of an algebra $A$ over an algebraically closed field, then $ A \twoheadrightarrow \End(V)$ is surjective. Denote the representations at hand $\rho_{V_i^H}$, $\rho_{V_i^\sigma}$ for $i = 1,2$. We supposed
$$\restriction{\rho_{V_i^H}}{U} = \restriction{\rho_{V_i^\sigma}}{U}.$$
So if $\rho_H =\rho_{V_1^H} ⊗ \rho_{V_2^H}$ and $\rho_{\sigma} = \rho_{V_1^\sigma} ⊗ \rho_{V_2^\sigma}$ we define $ \rho := \restriction{\rho_H}{U⊗ U} = \restriction{\rho_\sigma}{U ⊗ U}$. Using Lemma \ref{L:DH} and Lemma \ref{lem}, for any $x,y \in U$:
$$\rho_H \big(D^H \big) \rho(x ⊗ y) \rho_H \big((D^H)^{-1} \big)=\rho\big( \mathcal{D}(x ⊗ y) \big)= \rho_\sigma \big( D^{\sigma}\big) \rho(x ⊗ y) \rho_\sigma \big((D^{\sigma})^{-1} \big).$$
Which means
$$
\rho_H \big(D^H \big)^{-1} \rho_\sigma \big( D^{\sigma}\big) \rho(x ⊗ y) = \rho(x ⊗ y) \rho_H \big(D^H \big)^{-1}\rho_\sigma \big(D^{\sigma}\big).
$$
Using the density theorem, $\rho_H \big(D^H \big)^{-1}\rho_\sigma
\big(D^{\sigma}\big)$ commutes with any element in\\ {$\End_\C(V_1)
  ⊗ \End_\C(V_2) = \End_\C(V_1 ⊗ V_2)$}. So this linear map is a
scalar multiple of the identity.
\end{proof}

From now on, we consider Hopf algebra $A = \UH \bigotimes_{U} U_q\gl(1|1)^{\sigma}$. 
$A$ contains both algebras $\UH$ and $U_q\gl(1|1)^{\sigma}$.

Formally, setting $q = e^h$, $q^T := e^{hT}$ and $\textbf{i}^{\alpha}
= e^{\textbf{i} \frac{\pi}{2} \alpha}$, we also consider that
$$\textbf{i}^H = k = q^{-{\EE}} \sigma$$
which means that we will only study representations of $A$ that
satisfy this relation. Recall from Equations \eqref{eq:repU} the
representation of $U$ with parameter $a$.
We can look for the representations of $A$ that simultaneously extend
to the representations of $\UH$ and $U_q\gl(1|1)^{\sigma}$ we already
described. If $\varepsilon \in\{0, 1\}$ is the degree of the first
vector $e_0$ of the basis $(e_0,e_1)$ we choose, direct computation of
such a representation $V(\alpha,a,2j,\varepsilon,J)$ shows it is well
defined if and only if:
\begin{equation}
  \left\{
      \begin{aligned}
        (-1)^{\varepsilon} q^{-2j} = a \\
        a = e^{\textbf{i} \frac{\pi}{2} (\alpha+1)} = \textbf{i}^{\alpha+1}\\
      \end{aligned}
    \right.
\end{equation}
Setting $s = q^j \textbf{i}^{\frac{\alpha -3 -2\varepsilon}{2}}=\pm1$, we can compute the coefficient $\RH / R^{\sigma} = D^H / D^\sigma$ given by Proposition \ref{cor} in our case.

\begin{prop}
$\RH / R^{\sigma} = D^H / D^\sigma = s s' (-1)^{\varepsilon \varepsilon '} \emph{\textbf{i}}^{\varepsilon + \varepsilon '} \emph{\textbf{i}}^{\frac{\alpha \alpha ' -1}{2}} q^{j J' + j' J}.$
\end{prop}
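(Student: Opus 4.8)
The plan is to compute the scalar $D^H/D^\sigma$ directly on a single basis vector of $V_1 \otimes V_2$, say $e_0 \otimes e_0$, since by Proposition \ref{cor} we already know the ratio is a scalar multiple of the identity, so evaluating on one vector suffices. First I would record the action of the relevant exponents on the basis vectors of a representation $V(\alpha,a,2j,\varepsilon,J)$: from the data of $V_\alpha$ we have $H$ acting on $e_0$ by $\alpha+1$ and on $e_1$ by $\alpha-1$, and from the $\gl(1|1)$ side $\EE$ acts by $2j$ on both $e_0,e_1$ while $G$ acts by $(J+1)/2$ on $e_0$ and $(J-1)/2$ on $e_1$; the grading is $|e_0|=\varepsilon$, $|e_1|=\varepsilon+1$. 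Then on $V_1 \otimes V_2$ with parameters $(\alpha,2j,\varepsilon,J)$ and $(\alpha',2j',\varepsilon',J')$, $D^H = \ii^{H \otimes H/2}$ sends $e_0 \otimes e_0$ to $\ii^{(\alpha+1)(\alpha'+1)/2}\, e_0 \otimes e_0$, and $D^\sigma = R_1\, q^{-(\EE\otimes G + G\otimes \EE)}$ acts on $e_0 \otimes e_0$: since $|e_0 \otimes e_0|$ is even, $R_1$ acts as the identity on it (one checks $R_1(v\otimes w)=v\otimes w$ when $|v|=|w|$ and $R_1(v\otimes w)=\sigma v\otimes w$ or similar on the odd-odd part; on $e_0\otimes e_0$ it is the identity precisely because $\tfrac12(1+\sigma)\otimes 1$ type projections act trivially on even-even vectors — I would state this cleanly), and $q^{-(\EE\otimes G+G\otimes\EE)}$ multiplies $e_0 \otimes e_0$ by $q^{-(2j\cdot(J'+1)/2 + (J+1)/2\cdot 2j')} = q^{-(j(J'+1)+j'(J+1))} = q^{-jJ'-j'J-j-j'}$.

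Dividing, $D^H/D^\sigma = \ii^{(\alpha+1)(\alpha'+1)/2}\, q^{jJ'+j'J+j+j'}$ as the scalar, and it remains to massage $\ii^{(\alpha+1)(\alpha'+1)/2} q^{j+j'}$ into the stated form $s s' (-1)^{\varepsilon\varepsilon'} \ii^{\varepsilon+\varepsilon'} \ii^{(\alpha\alpha'-1)/2}$. For this I expand $(\alpha+1)(\alpha'+1) = \alpha\alpha' + \alpha + \alpha' + 1$, so $\ii^{(\alpha+1)(\alpha'+1)/2} = \ii^{(\alpha\alpha'-1)/2}\,\ii^{(\alpha+\alpha'+2)/2} = \ii^{(\alpha\alpha'-1)/2}\,\ii^{(\alpha+\alpha')/2}\,\ii$. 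Now I use the defining relation $s = q^j \ii^{(\alpha-3-2\varepsilon)/2}$, which gives $q^j = s\,\ii^{-(\alpha-3-2\varepsilon)/2} = s\,\ii^{(-\alpha+3+2\varepsilon)/2}$, and similarly $q^{j'} = s'\,\ii^{(-\alpha'+3+2\varepsilon')/2}$. Multiplying these, $q^{j+j'} = s s'\,\ii^{(-\alpha-\alpha'+6+2\varepsilon+2\varepsilon')/2} = s s'\,\ii^{(-\alpha-\alpha')/2}\,\ii^{3}\,\ii^{\varepsilon+\varepsilon'}$. Therefore $\ii^{(\alpha+1)(\alpha'+1)/2}\,q^{j+j'} = \ii^{(\alpha\alpha'-1)/2}\,\ii^{(\alpha+\alpha')/2}\,\ii \cdot s s'\,\ii^{-(\alpha+\alpha')/2}\,\ii^{3}\,\ii^{\varepsilon+\varepsilon'} = s s'\,\ii^{(\alpha\alpha'-1)/2}\,\ii^{4}\,\ii^{\varepsilon+\varepsilon'} = s s'\,\ii^{(\alpha\alpha'-1)/2}\,\ii^{\varepsilon+\varepsilon'}$, using $\ii^4 = 1$.

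Comparing with the target, this differs by the factor $(-1)^{\varepsilon\varepsilon'}$, which should be absorbed by being more careful about the action of $R_1$: on the vector $e_0 \otimes e_0$ the operator $R_1 = \tfrac12(1\otimes 1 + \sigma\otimes 1 + 1\otimes \sigma - \sigma\otimes \sigma)$ acts by the scalar $\tfrac12(1 + (-1)^\varepsilon + (-1)^{\varepsilon'} - (-1)^{\varepsilon+\varepsilon'})$, which equals $1$ if $\varepsilon=\varepsilon'=0$ and $-1$ if $\varepsilon=\varepsilon'=1$, i.e.\ it equals $(-1)^{\varepsilon\varepsilon'}$ in the cases where $\varepsilon=\varepsilon'$, and one checks the mixed cases also give the value $(-1)^{\varepsilon\varepsilon'}$ (namely $1$) — actually $\tfrac12(1+(-1)^\varepsilon+(-1)^{\varepsilon'}-(-1)^{\varepsilon+\varepsilon'})$ is exactly $(-1)^{\varepsilon\varepsilon'}$ for all four sign choices, as a direct check shows. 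So $D^\sigma$ on $e_0\otimes e_0$ actually carries this extra $(-1)^{\varepsilon\varepsilon'}$, and dividing puts $(-1)^{\varepsilon\varepsilon'}$ (its own inverse) into $D^H/D^\sigma$, yielding exactly $s s' (-1)^{\varepsilon\varepsilon'} \ii^{\varepsilon+\varepsilon'} \ii^{(\alpha\alpha'-1)/2} q^{jJ'+j'J}$. The main obstacle I anticipate is precisely this bookkeeping with $R_1$ and the grading conventions — getting the sign $(-1)^{\varepsilon\varepsilon'}$ and the power of $\ii$ to land correctly — rather than anything conceptually deep; the conceptual content is entirely supplied by Proposition \ref{cor}, which reduces everything to a one-vector computation.
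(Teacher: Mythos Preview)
Your argument is correct and follows essentially the same route as the paper: both compute the diagonal action of $D^H$ and $D^\sigma=R_1D'$ on the tensor basis and read off the scalar from a single entry, then simplify using the definition of $s$. The paper writes out the full $4\times4$ diagonal matrices for $D^H$, $R_1$, and $D'$ and then computes one diagonal coefficient of $D^H(D^\sigma)^{-1}$; you go directly to the $(1,1)$ entry via the vector $e_0\otimes e_0$, which is the same computation. The only cosmetic issue is that your discussion of $R_1$ is presented as a correction to an initial misstep; it would read more cleanly to state from the outset that $R_1$ acts on $e_0\otimes e_0$ by $(-1)^{\varepsilon\varepsilon'}$, which you verify correctly.
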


\begin{proof}
Using representation $V ⊗ V' = V(\alpha,a,2j,\varepsilon,J)⊗ V(\alpha ',a',2j',\varepsilon ',J')$ in basis\\ $(\e_0 ⊗ e_0, \e_0 ⊗ e_1, \e_1 ⊗ e_0, \e_1 ⊗ e_1)$, we can write:
$$D^H = \textbf{i}^{\alpha \alpha' /2}\begin{pmatrix}
   \textbf{i}^{\frac{\alpha + \alpha ' + 1}{2}} & 0 & 0 & 0 \\
   0 & \textbf{i}^{\frac{-\alpha + \alpha ' - 1}{2}} & 0 & 0 \\
   0 & 0 & \textbf{i}^{\frac{\alpha - \alpha ' - 1}{2}} & 0 \\
   0 & 0 & 0 & \textbf{i}^{\frac{-\alpha - \alpha ' + 1}{2}}
   \end{pmatrix}.$$
Moreover, $D^\sigma = R_1 D'$ and 
$$R_1 = (-1)^{\varepsilon \varepsilon '}\begin{pmatrix}
   1 & 0 & 0 & 0 \\
   0 & (-1)^\varepsilon & 0 & 0 \\
   0 & 0 & (-1)^{\varepsilon '} & 0 \\
   0 & 0 & 0 & (-1)^{\varepsilon + \varepsilon ' +1}
   \end{pmatrix},$$ 
$$D' = q^{-j J' - j' J}\begin{pmatrix}
   q^{-j-j'} & 0 & 0 & 0 \\
   0 & q^{j-j'} & 0 & 0 \\
   0 & 0 & q^{-j+j'} & 0 \\
   0 & 0 & 0 & q^{j+j'}
 \end{pmatrix}.$$ Since $a = \textbf{i}^\alpha = (-1)^{\varepsilon +1}
 \textbf{i} q^{-2j}$, the formulas make appear two square roots of $a$:
$$\sqrt[\alpha]{a} = \textbf{i}^{\alpha/2} \text{ and } \sqrt[j]{a} = \textbf{i}^{\varepsilon + \frac{3}{2}} q^{-j}=s\sqrt[\alpha]{a}.$$
That way, computing any of the diagonal coefficients of $D^H (D^\sigma)^{-1}$ we find the announced element.   
\end{proof}

\section{An integral form of $U_q\gl(n|1)$ and its specialization}
\subsection{Quasitriangular Hopf superalgebra $U_q\gl(n|1)$}
Here we define the $h$-adic quasitriangular Hopf superalgebra
$U_q\gl(n|1)$ that we will use to construct the Links-Gould invariant
$LG^{n,1}$. The conventions we use for generators and relations are
those chosen by Zhang and De Wit in \cite{Zh,DW}. $\mathcal{I} = \{ 1,
2, \ldots , n+1\}$ will be the set of indices. We introduce a grading
$[a]\in \mathbb{Z}/2\mathbb{Z}$ for any $a \in \mathcal{I}$ by setting
$$
[a] = 0 \text{ if } a \leqslant n \text{ and } [a] = 1 \text{ when } a = n+1.
$$
The superalgebra has $(n+1)^2$ generators divided into three
families. There are $n+1$ even Cartan generators $E_a^a$. There are
$\frac{1}{2}n(n+1)$ lowering generators $E_a^b$ parametrized by
$a<b$. Finally there are $\frac{1}{2}n(n+1)$ raising generators
$E_b^a$, with $a<b$. The degree of $E_a^b$ is given by $[a] + [b]$.

For $a \in \mathcal{I}$, $a\neq n+1$, set $K_a=q^{E^a_a}$, and set
$K_{n+1} =q^{-E_{n+1}^{n+1}}$.  In the following $ [X,Y]$ denotes the
super commutator $[X,Y]=XY - (-1)^{[X][Y]} YX$.

Now let us present the relations there are between elements of
$U_q\gl(n|1)$. 

For any $a,b\in\I$ with $|a-b|\ge2$ and for any  $c$ in the interval between $a$ and $b$,
$$
E_b^a = E_c^a E_b^c - q^{\sign(a-b)} E_b^c E_c^a.
$$
For any $a,b\in\I$,
$$
E_a^aE_b^b=E_b^bE_a^a
\text{, }E_a^aE_{b \pm 1}^b = E_{b \pm 1}^b \bp{E_a^a+{\delta_b^a - \delta_{b \pm 1}^a} }
$$
$$
[E_{a+1}^a , E_b^{b+1}] = \delta_b^{a} \frac{K_a K_{a+1}^{-1} - K_a^{-1} K_{a+1}}{q - q^{-1}}$$ 
$$\text{which generalizes for }a<b\text{ to } [E_{b}^a , E_a^{b}] = \frac{K_a K_{b}^{-1} - K_a^{-1} K_{b}}{q- q^{-1}},
$$
$$(E_{n+1}^n)^2 = (E_n^{n+1})^2 = 0 \text{, which implies } (E_{n+1}^i)^2 = (E_i^{n+1})^2 = 0 \text{ for } i< n+1.$$
The Serre relations: for any $a,b\in\I$ with $|a-b|\ge2$,
$$E_a^{a+1} E_b^{b+1} = E_b^{b+1} E_a^{a+1} \text{, } E_{a+1}^a E_{b+1}^b = E_{b+1}^b E_{a+1}^a ,$$
and for $a\le n-1$,
$$E_{a+1}^a E_{a+2}^a =qE_{a+2}^aE_{a+1}^a\ets E_a^{a+1} E_a^{a+2} =qE_a^{a+2}E_a^{a+1}\ets$$
$$E_{a+2}^a E_{a+2}^{a+1} =qE_{a+2}^{a+1}E_{a+2}^a\et 
E_a^{a+2} E_{a+1}^{a+2} =qE_{a+1}^{a+2}E_a^{a+2}.$$ 
These relations can be completed into a set of ``quasi-commutation''
relations indexed by pairs of root vectors (see \cite[Lemma 1]{DW}
where a reordering algorithm gives a constructive proof of the
Poincar\'{e}-Birkhoff-Witt theorem) but these relations are redundant
over the field $\C(q)$.

We consider the Hopf algebra structure given by the
coproduct
$$\Delta(E_{a+1}^a)=E_{a+1}^a⊗K_aK_{a+1}^{-1}+ 1⊗E_{a+1}^a\ets \Delta(E^{a+1}_a)
=K_a^{-1} K_{a+1}⊗E_a^{a+1}+E_a^{a+1}⊗1$$
$$\Delta(K_a)=K_a⊗K_a\et\Delta(E_a^a)=E_a^a⊗1+1⊗E_a^a$$
which admits\footnote{we use here the coproduct and $R$-matrix of
  \cite{KT} conjugated by $D^\gl$.} the universal $R$-matrix $R^\gl
=D^\gl\check R^\gl $ with $D^\gl=q^{\sum_{i\le
    n}E_i^i⊗E_i^i-E_{n+1}^{n+1}⊗E_{n+1}^{n+1}}$ and
$$
\check R^\gl   ={\prod_{i=1}^n\bp{\prod_{j=i+1}^{n}
\e_q((q-q^{-1}) E_j^i ⊗ E_i^j)}\e'_q(E_{n+1}^i⊗E^{n+1}_i )},
$$
where $\e'_q(x)=(1-(q-q^{-1})x)$, $\e_q(x) = \sum_{k=0}^{+\infty}\frac{x^k}{(k)_q!}$, $(k)_q = \frac{1-q^k}{1-q}$ and $(k)_q ! = (1)_q (2)_q \ldots (k)_q$.
Remark that the order of the factors matters in $\check R^\gl$.

\subsection{Integral form and interesting subalgebras}
We now give an integral form of $U_q\gl(n|1)$ which supports evaluation
at $q=-1$.  Let $\A_q$ be the $\Zq$-subalgebra of $U_q\gl(n|1)$ generated
by elements $K_a$, $\mathcal{E}_b^a := \bp{q - q^{-1}} E_b^a$ when $a<b$ and
$\mathcal{E}_b^a := E_b^a$ when $a>b$.  The 
relations of $U_q\gl(n|1)$ 
$$
[E_{b}^a , E_a^{b}] = \frac{K_a K_{b}^{-1} - K_a^{-1} K_{b}}{q - q^{-1}}
$$
for $a<b$, are replaced in algebra $\A_q$ by
$$
[\mathcal{E}_{b}^a , \mathcal{E}_a^{b}] = K_a K_{b}^{-1} - K_a^{-1} K_{b}.
$$
Still, $\A_q$ admits a presentations similar to that of $U_q\gl(n|1)$.  No
additional relations are needed because the analog of the above
commutation relations are enough to express any element in the
Poincar\'{e}-Birkhoff-Witt basis.

In the bosonization $\A^\sigma_q$ of $\A_q$, define for $i = 1, \ldots,n$ the algebra
$$
A_i = \brk{ e_i = -\mathcal{E}_{n+1}^i \sigma ,\, f_i = \mathcal{E}_i^{n+1},\, k_i =  K_iK_{n+1}^{-1} \sigma } \subset \mathcal{A}_q^\sigma.
$$

\begin{prop}
Algebra $A_i$ is isomorphic to $U$. Indeed:
$$
e_i f_i - f_i e_i = k_i - k_i^{-1}, 
$$
$$
k_i e_i + e_i k_i = k_i f_i + f_i k_i = 0.
$$
\end{prop}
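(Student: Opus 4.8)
The plan is to verify directly the three relations defining $U$ (as presented in Section~\ref{S:U}), using the bosonization formulas and the explicit commutation relations of $\A_q$ recorded above, then invoke the fact that $U$ has the given presentation to conclude that $e_i\mapsto e_i$, $f_i\mapsto f_i$, $k_i\mapsto k_i$ extends to a surjective algebra homomorphism $U\to A_i$; a dimension or PBW count then upgrades this to an isomorphism. Concretely, the element $\sigma$ in $\A_q^\sigma$ satisfies $\sigma^2=1$ and $\sigma x=(-1)^{|x|}x\sigma$, and the relevant parity data is $|\mathcal E_{n+1}^i|=[i]+[n+1]=[i]+1$, $|\mathcal E_i^{n+1}|=[i]+1$, while $K_i,K_{n+1}$ are even; so $k_i=K_iK_{n+1}^{-1}\sigma$ anticommutes with both $\mathcal E_{n+1}^i$ and $\mathcal E_i^{n+1}$ after one tracks the $\sigma$-signs. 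This is exactly what produces the mixed relations $k_ie_i+e_ik_i=0=k_if_i+f_ik_i$.

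First I would compute $e_if_i-f_ie_i$. We have $e_if_i=-\mathcal E_{n+1}^i\sigma\,\mathcal E_i^{n+1}=-(-1)^{|\mathcal E_i^{n+1}|}\mathcal E_{n+1}^i\mathcal E_i^{n+1}\sigma$ and $f_ie_i=-\mathcal E_i^{n+1}\mathcal E_{n+1}^i\sigma$, so $e_if_i-f_ie_i=-\big((-1)^{[i]+1}\mathcal E_{n+1}^i\mathcal E_i^{n+1}-\mathcal E_i^{n+1}\mathcal E_{n+1}^i\big)\sigma$. The generators $E_i^{n+1},E_{n+1}^i$ are odd (since $[i]+[n+1]$ equals $[i]+1$, which is odd exactly when $[i]=0$; when $[i]=1$, i.e.\ $i=n+1$, we are outside the range $i\le n$, so always $[i]=0$ here and these are odd). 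Hence the super-bracket is $[\mathcal E_{n+1}^i,\mathcal E_i^{n+1}]=\mathcal E_{n+1}^i\mathcal E_i^{n+1}+\mathcal E_i^{n+1}\mathcal E_{n+1}^i$. Using the integral-form relation $[\mathcal E_b^a,\mathcal E_a^b]=K_aK_b^{-1}-K_a^{-1}K_b$ with $a=i$, $b=n+1$ (rewriting it with the roles matched to $\mathcal E_{n+1}^i$, $\mathcal E_i^{n+1}$), one gets $\mathcal E_{n+1}^i\mathcal E_i^{n+1}+\mathcal E_i^{n+1}\mathcal E_{n+1}^i = \pm(K_iK_{n+1}^{-1}-K_i^{-1}K_{n+1})$; tracking signs, this becomes $K_iK_{n+1}^{-1}-K_i^{-1}K_{n+1}$ times the appropriate sign, and multiplying by $\sigma$ and the overall $-1$ yields $(K_iK_{n+1}^{-1}-K_i^{-1}K_{n+1})\sigma$. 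Since $\sigma$ is central up to the grading and $K_iK_{n+1}^{-1},K_i^{-1}K_{n+1}$ are even, this equals $K_iK_{n+1}^{-1}\sigma-(K_iK_{n+1}^{-1}\sigma)^{-1}=k_i-k_i^{-1}$, as desired. (Here I use $k_i^{-1}=\sigma K_i^{-1}K_{n+1}=K_i^{-1}K_{n+1}\sigma$ because $\sigma$ is even-central on the $K$'s.)

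Next I would check $k_ie_i+e_ik_i=0$ and $k_if_i+f_ik_i=0$. For the first: $k_ie_i=K_iK_{n+1}^{-1}\sigma\cdot(-\mathcal E_{n+1}^i\sigma)=-K_iK_{n+1}^{-1}(-1)^{|\mathcal E_{n+1}^i|}\mathcal E_{n+1}^iK_iK_{n+1}^{-1}\sigma^2$ — wait, more carefully: $\sigma\mathcal E_{n+1}^i=(-1)^{[i]+1}\mathcal E_{n+1}^i\sigma=-\mathcal E_{n+1}^i\sigma$ since $[i]=0$, so $k_ie_i=-K_iK_{n+1}^{-1}(-\mathcal E_{n+1}^i\sigma)\sigma=K_iK_{n+1}^{-1}\mathcal E_{n+1}^i$. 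Meanwhile $e_ik_i=-\mathcal E_{n+1}^i\sigma\,K_iK_{n+1}^{-1}\sigma=-\mathcal E_{n+1}^i K_iK_{n+1}^{-1}\sigma^2=-\mathcal E_{n+1}^iK_iK_{n+1}^{-1}$. Now the Cartan relations $E_a^aE_{b\pm1}^b=E_{b\pm1}^b(E_a^a+\delta_b^a-\delta_{b\pm1}^a)$, equivalently the $q$-commutation $K_a\mathcal E_b^c=q^{\delta_a^c-\delta_a^b}\mathcal E_b^cK_a$, give $K_iK_{n+1}^{-1}\mathcal E_{n+1}^i=q^{?}\mathcal E_{n+1}^iK_iK_{n+1}^{-1}$; one computes the exponent is such that $K_iK_{n+1}^{-1}\mathcal E_{n+1}^i = \mathcal E_{n+1}^i K_iK_{n+1}^{-1}$ up to a $q$-power — but actually since $e_i$ has weight making $k_i e_i = - e_i k_i$ rather than a $q$-scalar, the cancellation of the $q$-power against the parity sign is the point: $k=q^{-\mathcal E}\sigma$ type identities force $ke+ek=0$ exactly. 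I would simply expand both terms, use $K_aE_b^cK_a^{-1}=q^{\delta^c_a-\delta^b_a}E_b^c$ to pull $K_iK_{n+1}^{-1}$ past $\mathcal E_{n+1}^i$, and check the resulting $q$-exponent is $0$ so that $k_ie_i+e_ik_i=(K_iK_{n+1}^{-1}\mathcal E_{n+1}^i)-(K_iK_{n+1}^{-1}\mathcal E_{n+1}^i)=0$; identically for $f_i$.

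The main obstacle is purely bookkeeping: keeping the Koszul signs from $\sigma$ and the $q$-exponents from the Cartan relations straight, and confirming that the sign conventions in the integral-form relation $[\mathcal E_b^a,\mathcal E_a^b]=K_aK_b^{-1}-K_a^{-1}K_b$ combine with the defining sign $e_i=-\mathcal E_{n+1}^i\sigma$ to produce $k_i-k_i^{-1}$ with the correct overall sign rather than its negative (this is presumably why the minus sign is built into $e_i$). Once the three relations are verified, the presentation of $U$ gives a homomorphism $U\to A_i$; surjectivity is immediate since $e_i,f_i,k_i^{\pm1}$ generate $A_i$, and injectivity follows either from noting $\dim_{\C(q)}U=8$ on each central block $k_i^2=g$ and that the image is nonzero on a representation (e.g.\ restrict a known $\A_q^\sigma$-module), or more cleanly from the PBW basis of $\A_q$ which shows $1,e_i,f_i,e_if_i,k_i,k_ie_i,k_if_i,k_ie_if_i$ and their $k_i$-translates are linearly independent, matching the PBW basis of $U$. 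I will present the relation checks in detail and dispatch the isomorphism claim by this linear-independence remark.
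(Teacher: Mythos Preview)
Your approach is the same as the paper's: verify the defining relations of $U$ by direct computation using the bosonization sign rule $\sigma x=(-1)^{|x|}x\sigma$ and the integral-form relation $[\mathcal E_b^a,\mathcal E_a^b]=K_aK_b^{-1}-K_a^{-1}K_b$. The paper's proof is the one-line version of what you write out.

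Two minor points. First, you never mention the relations $e_i^2=f_i^2=0$, which are part of the presentation of $U$; they follow immediately from $(E_{n+1}^i)^2=(E_i^{n+1})^2=0$ and should be recorded. Second, your hedging about the $q$-exponent in the $k_ie_i+e_ik_i$ check is unnecessary: since $[E_i^i,E_{n+1}^i]=E_{n+1}^i$ and $[E_{n+1}^{n+1},E_{n+1}^i]=-E_{n+1}^i$ while $K_{n+1}=q^{-E_{n+1}^{n+1}}$, one has $K_iE_{n+1}^iK_i^{-1}=qE_{n+1}^i$ and $K_{n+1}^{-1}E_{n+1}^iK_{n+1}=q^{-1}E_{n+1}^i$, so $K_iK_{n+1}^{-1}$ genuinely commutes with $\mathcal E_{n+1}^i$ for all $q$, and the anticommutation $k_ie_i+e_ik_i=0$ comes entirely from the $\sigma$-sign, exactly as you suspected. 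With those two clarifications your write-up is complete and matches the paper.
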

\begin{proof}
  Direct computations from the defining relations of $\mathcal{A}_q$
  and Lemma 1 of \cite{DW}.  In particular, $e_if_i - f_i e_i =
  -\mathcal{E}_{n+1}^i \sigma
  \mathcal{E}_i^{n+1}+\mathcal{E}_i^{n+1}\mathcal{E}_{n+1}^i
  \sigma=[\mathcal{E}_{n+1}^i,\mathcal{E}_i^{n+1}]\sigma=k_i - k_i^{-1}.$
\end{proof}

\begin{rem}
  However, $A_i$ is not isomorphic to $U$ as a Hopf algebra (except
  for $A_n$), which can be seen by looking at the coproduct of
  elements of $A_i$ in $\mathcal{A}_q$. This will not be a problem for
  us.
\end{rem}

Set $1 \leqslant i \neq j \leqslant n$. Using \cite{DW} Lemma 1 once
again, we want to see at what conditions any $x \in A_i$ and $y \in
A_j$ commute.
\begin{lemma}
We have the following commutations:
$$
e_i e_j = -q^{-1} e_j e_i \text{, } f_i f_j = -q^{-1} f_j f_i \text{,
} k_i k_j = k_j k_i \text{,}
$$
$$
\text{if }i<j\text{, }e_i f_j - f_j e_i = \sigma K_j K_{n+1}^{-1} \mathcal{E}_j^i \text{, otherwise }e_i f_j - f_j e_i = \sigma \bp{q - q^{-1}} \mathcal{E}_j^i K_{n+1} K_i^{-1},
$$
$$
k_j e_i = -q^{-1} e_i k_j \text{, }k_j f_i = - q f_i k_j.
$$
\end{lemma}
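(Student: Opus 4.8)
The plan is to carry out direct computations inside $\mathcal{A}_q^\sigma$ using the quasi-commutation relations of \cite[Lemma 1]{DW}, just as in the previous proposition. The key point is that all six identities are statements about products of root vectors $\mathcal{E}_{n+1}^i$, $\mathcal{E}_i^{n+1}$, $K_iK_{n+1}^{-1}$ (with $i\ne j$), together with the sign-twist coming from $\sigma$, so each reduces to one line of the reordering algorithm plus bookkeeping of the parities $|e_i|=|f_i|=1$, $|k_i|=0$ in the semidirect product with $\Z/2\Z$.

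First I would treat the $e_ie_j$ relation: since $e_i=-\mathcal{E}_{n+1}^i\sigma$ and $e_j=-\mathcal{E}_{n+1}^j\sigma$, we get $e_ie_j=\mathcal{E}_{n+1}^i\sigma\mathcal{E}_{n+1}^j\sigma=-\mathcal{E}_{n+1}^i\mathcal{E}_{n+1}^j$ (the $\sigma$ picks up a sign because $\mathcal{E}_{n+1}^j$ is odd), and then one invokes the relation between the two raising root vectors $E_{n+1}^i$, $E_{n+1}^j$ from \cite[Lemma 1]{DW}; for $i<j$ the Serre-type relation $E_{a+1}^aE_{a+2}^a=qE_{a+2}^aE_{a+1}^a$ and its iterates give $\mathcal{E}_{n+1}^i\mathcal{E}_{n+1}^j=q\,\mathcal{E}_{n+1}^j\mathcal{E}_{n+1}^i$ (the $q-q^{-1}$ normalization cancels since both are normalized the same way), so $e_ie_j=-q\mathcal{E}_{n+1}^j\mathcal{E}_{n+1}^i=-q^{-1}\cdot(-q^2)\mathcal{E}_{n+1}^j\mathcal{E}_{n+1}^i$; I would double-check the exponent so that the final factor is exactly $-q^{-1}$ in the stated convention, and note the $i>j$ case is symmetric. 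The $f_if_j$ relation is the mirror computation with lowering root vectors $E_i^{n+1}$, $E_j^{n+1}$, and $k_ik_j=k_jk_i$ is immediate since the Cartan elements commute and $\sigma$ is central up to the (here trivial) parity sign. For the cross terms $k_je_i$ and $k_jf_i$, I would use $E_j^jE_{b\pm1}^b=E_{b\pm1}^b(E_j^j+\delta$-corrections$)$ to compute how $K_jK_{n+1}^{-1}$ conjugates $\mathcal{E}_{n+1}^i$ and $\mathcal{E}_i^{n+1}$, picking up powers of $q$ from the Cartan eigenvalues and a sign from moving $\sigma$ past an odd generator, arriving at the asymmetric $-q^{-1}$ versus $-q$.

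The main obstacle — and the step I would be most careful about — is the mixed commutator $e_if_j-f_je_i$, where the answer depends on whether $i<j$ or $i>j$ and is no longer a scalar times a monomial in the $A_\bullet$ generators but genuinely involves a new root vector $\mathcal{E}_j^i$. Here one expands $e_if_j-f_je_i=-\mathcal{E}_{n+1}^i\sigma\mathcal{E}_j^{n+1}+\mathcal{E}_j^{n+1}\mathcal{E}_{n+1}^i\sigma=(\mathcal{E}_{n+1}^i\mathcal{E}_j^{n+1}+\mathcal{E}_j^{n+1}\mathcal{E}_{n+1}^i)\sigma$ (again the $\sigma$ flips a sign on the odd $\mathcal{E}_j^{n+1}$, turning the super-commutator into the relevant combination), and then one reads off from \cite[Lemma 1]{DW} the quasi-commutation relation between $E_{n+1}^i$ and $E_j^{n+1}$: for $a<b$ these root vectors satisfy an identity whose right-hand side is a product of a Cartan group-like with the root vector $E_j^i$, and the precise shape (whether it is $K_jK_{n+1}^{-1}\mathcal{E}_j^i$ or $(q-q^{-1})\mathcal{E}_j^iK_{n+1}K_i^{-1}$, and with $\mathcal{E}_j^i=(q-q^{-1})E_j^i$ or $=E_j^i$ depending on the sign of $i-j$) is exactly what produces the dichotomy in the statement. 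I would isolate this single PBW-reordering identity from \cite{DW}, verify the normalization factors $q-q^{-1}$ match the integral form $\mathcal{A}_q$, and then the two cases follow by substitution; everything else is routine sign- and $q$-power bookkeeping.
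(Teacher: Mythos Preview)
Your approach is essentially the paper's own: expand the definitions of $e_i,f_i,k_i$, push $\sigma$ past the odd root vectors to convert to ordinary products in $\mathcal{A}_q$, and then invoke the appropriate quasi-commutation relations from \cite[Lemma~1]{DW}. The paper's proof is in fact shorter than yours---it simply points to the specific equation numbers in \cite{DW} (Eqs.~(38), (39) for $e_ie_j$ and $f_if_j$, and Eqs.~(36)(c),(d) for the two $e_if_j$ cases) without writing out the $\sigma$-bookkeeping you sketch.
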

\begin{proof}
  The first two equalities correspond to  \cite[Eq. (38) and (39)]{DW} and the two brackets $[e_i,f_j]$ correspond to  \cite[Eq. (36) (c) and (d)]{DW}.
\end{proof}
\begin{cor}\label{commute}
Setting $q=-1$, in any quotient of $\mathcal{A}_{-1}^{\sigma}$ such that for any $1 \leqslant i<j \leqslant n$, $\mathcal{E}_j^i = 0$, the elements of two distinct $A_i$ commute.
\end{cor}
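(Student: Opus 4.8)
The plan is to deduce the corollary directly from the preceding Lemma by setting $q=-1$ in each of the listed commutation relations and observing which terms survive. First I would fix $1 \leqslant i \neq j \leqslant n$, pick homogeneous $x \in A_i$ and $y \in A_j$, and reduce to the case where $x$ and $y$ are each one of the three generators $e_\bullet, f_\bullet, k_\bullet$ (together with $k_\bullet^{-1}$), since the general case follows because $A_i$ and $A_j$ are generated by these elements and commutation is preserved under products and inverses. So the whole claim reduces to checking that the nine pairs $(e_i,e_j),(e_i,f_j),(e_i,k_j),(f_i,e_j),(f_i,f_j),(f_i,k_j),(k_i,e_j),(k_i,f_j),(k_i,k_j)$ commute at $q=-1$ under the stated hypothesis on the quotient.

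Next I would go through these pairs using the Lemma. For $(e_i,e_j)$ and $(f_i,f_j)$ the relation reads $e_ie_j = -q^{-1}e_je_i$ and $f_if_j=-q^{-1}f_jf_i$; at $q=-1$ we have $-q^{-1}=1$, so these become genuine commutations. The same applies to $k_j e_i = -q^{-1} e_i k_j = e_i k_j$ and $k_j f_i = -q f_i k_j = f_i k_j$ at $q=-1$ (and hence also to the relations involving $k_i^{-1}$, by inverting). The relation $k_ik_j=k_jk_i$ is already a commutation. The only pairs that need the hypothesis on the quotient are the two mixed brackets $[e_i,f_j]$: the Lemma gives $e_if_j - f_je_i = \sigma K_jK_{n+1}^{-1}\mathcal{E}_j^i$ when $i<j$ and $e_if_j - f_je_i = \sigma(q-q^{-1})\mathcal{E}_j^iK_{n+1}K_i^{-1}$ when $i>j$. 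In a quotient of $\mathcal{A}_{-1}^\sigma$ in which $\mathcal{E}_j^i = 0$ for all $1\leqslant i<j\leqslant n$, the right-hand side vanishes in both cases — directly when $i<j$, and because the factor $(q-q^{-1})$ is $0$ at $q=-1$ when $i>j$ — so $e_i$ and $f_j$ commute as well.

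Having established that every pair of generators of $A_i$ with a generator of $A_j$ commutes in the quotient, I would conclude by noting that the commutant of $A_i$ in $\mathcal{A}_{-1}^\sigma$ (modulo the relations defining the quotient) is a subalgebra, hence contains the subalgebra generated by all the generators of $A_j$, which is $A_j$ itself; symmetrically every element of $A_j$ commutes with every element of $A_i$. I do not expect any genuine obstacle here: the content is entirely bookkeeping, and the one slightly delicate point is simply remembering that for $i>j$ the vanishing of $[e_i,f_j]$ at $q=-1$ comes from the explicit factor $(q-q^{-1})$ rather than from the hypothesis $\mathcal{E}_j^i=0$ (which is only imposed for $i<j$). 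One should also check that there is no homogeneity/sign subtlety hidden in "elements of two distinct $A_i$ commute" — i.e. that it is ordinary commutation rather than super-commutation that is asserted — but the Lemma's relations are already written as ordinary products, so this is automatic.
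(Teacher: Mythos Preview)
Your proposal is correct and is exactly the argument the paper has in mind: the corollary is stated without proof precisely because it follows by specializing the preceding Lemma's relations at $q=-1$, and you have carried out this specialization correctly for each pair of generators, including the observation that for $i>j$ the bracket $[e_i,f_j]$ vanishes thanks to the explicit factor $(q-q^{-1})$ rather than the hypothesis on $\mathcal{E}_j^i$.
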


\subsection{Highest weight representation $V(0^n,\alpha)$}
Let $V(0^n, \alpha)$ be the highest weight irreducible
$2^n$-dimensional representation of $U_q\gl(n|1)$ of weight $(0^n,
\alpha)$, with $\alpha \notin \mathbb{Z}$. So $E_i^i$ is represented
by $0$, except for $E_{n+1}^{n+1}$ that is represented by
$\alpha$. Set $v_0$ a highest weight vector in $V(0^n,\alpha)$ and let
$V_q(0^n,\alpha)=\A_qv_0$. The Poincar\'{e}-Birkhoff-Witt theorem
proves that $$\bigg( \prod_{i=1}^n f_i^{m_i}v_0 \bigg)_{m_i \in
  \{0,1\}}$$ is a basis for vector space $V(0^n,\alpha)$ and for the
free $\Zq$-module $V_q(0^n,\alpha)$.  Set
$\mathcal{A}_{-1}^{\sigma}=\A_q^{\sigma}⊗_{q=-1}\C$ and
$V_{-1}(0^n,\alpha)=V_q(0^n,\alpha)⊗_{q=-1}\C$

\begin{prop}\label{rep}
  In the representation $V_{-1}(0^n,\alpha)$, for any $1
  \leqslant i<j \leqslant n$, $\mathcal{E}_j^i = 0$. 
  So $\mathcal{E}_j^i$ belongs to the kernel $I$ of the representation
  $\mathcal{A}_{-1}^{\sigma}
  \longrightarrow \End(V_{-1}(0^n,\alpha))$. As a consequence, the
  following map is well defined:
  $$
  \begin{array}{rcl}
    \Theta : \bigotimes_{i=1}^n A_i &\longrightarrow& \mathcal{A}_{-1}^{\sigma}/I\\
    ⊗_ix_i&\mapsto&\prod_ix_i
  \end{array}
  .$$
\end{prop}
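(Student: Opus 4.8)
The plan is to establish the two claims in turn, the second being a formal consequence of the first. For the first — that each $\mathcal{E}_j^i$ with $1\le i<j\le n$ acts by zero on $V_{-1}(0^n,\alpha)$, hence lies in $I$ — I would work on the PBW basis $\{v_m=\prod_i f_i^{m_i}v_0\}_{m_i\in\{0,1\}}$ of $V_{-1}(0^n,\alpha)=V_q(0^n,\alpha)\otimes_{q=-1}\C$, where $f_i=\mathcal{E}_i^{n+1}$, and exploit the normalisation $\mathcal{E}_j^i=(q-q^{-1})E_j^i$ that defines the integral form for $i<j$. Since $E_j^i$ is a raising generator and $v_0$ a highest weight vector, $E_j^iv_0=0$, hence $\mathcal{E}_j^iv_0=0$ already in $V_q(0^n,\alpha)$, a fortiori in $V_{-1}(0^n,\alpha)$. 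For a general $v_m$, I would push $\mathcal{E}_j^i$ to the right past the factors $f_k$ using the reordering relations of $\mathcal{A}_q$ (those of \cite[Lemma 1]{DW}). A weight comparison shows that $\mathcal{E}_j^i$ crosses each $f_k$ with $k\ne i$ at the cost only of a Laurent monomial in $q$, while crossing the single possible factor $f_i$ produces, beyond such a term, a correction proportional to $\mathcal{E}_j^{n+1}=f_j$. The crucial point is that this correction carries a factor $q-q^{-1}$: indeed $[\mathcal{E}_j^i,\mathcal{E}_i^{n+1}]=(q-q^{-1})[E_j^i,E_i^{n+1}]$, and $[E_j^i,E_i^{n+1}]$ is itself a multiple of the root vector $E_j^{n+1}$ (possibly times a fixed power of some $K_a$) with Laurent-monomial coefficient, in particular with no denominator — the two roots at stake add up to the root of $E_j^{n+1}$ rather than to zero, so no term of the form $\frac{K-K^{-1}}{q-q^{-1}}$ can occur. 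Collecting the terms and dropping the one that ends in $\mathcal{E}_j^iv_0=0$, every surviving term of $\mathcal{E}_j^iv_m$ is divisible by $q-q^{-1}$ with nothing to cancel it, so $\mathcal{E}_j^iv_m=0$ when $q=-1$; thus $\mathcal{E}_j^i\in I$.

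For the second claim I would invoke the universal property of the tensor product of $\C$-algebras: a family of $\C$-algebra homomorphisms $\phi_i\colon A_i\to\mathcal{A}_{-1}^{\sigma}/I$ whose images pairwise commute induces a $\C$-algebra homomorphism $\bigotimes_{i=1}^n A_i\to\mathcal{A}_{-1}^{\sigma}/I$ sending $\otimes_i x_i$ to $\prod_i\phi_i(x_i)$. I take $\phi_i$ to be the composition of the inclusion $A_i\subset\mathcal{A}_q^{\sigma}$ with the surjections $\mathcal{A}_q^{\sigma}\to\mathcal{A}_{-1}^{\sigma}\to\mathcal{A}_{-1}^{\sigma}/I$. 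By the first claim, $\mathcal{E}_j^i=0$ in $\mathcal{A}_{-1}^{\sigma}/I$ for all $1\le i<j\le n$, so Corollary \ref{commute} applies and the images of the $\phi_i$ do pairwise commute; the resulting homomorphism is exactly $\Theta$.

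The main obstacle is the first claim, specifically the reordering bookkeeping inside it: one has to be certain that the term produced when $\mathcal{E}_j^i$ crosses $f_i$ genuinely retains its factor $q-q^{-1}$ and that no denominator appears anywhere that could cancel against it. This is where the explicit integral normalisation $\mathcal{E}_b^a=(q-q^{-1})E_b^a$ (for $a<b$) and the precise form of the relations of \cite[Lemma 1]{DW} are used. A possibly cleaner alternative is to observe that, as a module over $U_q\gl(n)\subset U_q\gl(n|1)$, $V(0^n,\alpha)$ is a $q$-analogue of the exterior algebra $\Lambda^\bullet(\C^n)^*$, on whose natural basis each $E_j^i$ ($i<j\le n$) acts by a matrix with Laurent-polynomial entries (no divided-power denominators arise for exterior powers), so that $\mathcal{E}_j^i=(q-q^{-1})E_j^i$ acts by a matrix all of whose entries are divisible by $q-q^{-1}$, hence vanish at $q=-1$.
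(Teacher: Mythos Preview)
Your approach is essentially the paper's: push $\mathcal{E}_j^i$ past the $f_c$'s using the reordering relations of \cite[Lemma~1]{DW}, observe that every commutator $[\mathcal{E}_j^i,f_c]=(q-q^{-1})[E_j^i,E_c^{n+1}]$ carries the prefactor $(q-q^{-1})$ and produces no denominator, hence vanishes at $q=-1$; then $\mathcal{E}_j^i v_m=f_1^{m_1}\cdots f_n^{m_n}\,\mathcal{E}_j^i v_0=0$, and Corollary~\ref{commute} gives the well-definedness of $\Theta$ exactly as you argue via the universal property.

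There is one inaccuracy in your case analysis that you should fix. Your weight comparison leads you to assert that for $k\ne i$ the element $\mathcal{E}_j^i$ crosses $f_k$ ``at the cost only of a Laurent monomial in $q$'', i.e.\ that the only nontrivial commutator is with $f_i$. This is false when $i<k<j$: by \cite[Eq.~(43)(a)]{DW} one has
\[
[E_j^i,E_k^{n+1}]=-(q-q^{-1})\,K_kK_j^{-1}E_k^{i}E_j^{n+1},
\]
which is not a mere scalar. (Classically the weight argument would kill it, but the quantum reordering produces this extra term.) Fortunately this does not damage your conclusion: after inserting the normalisation $\mathcal{E}_j^i=(q-q^{-1})E_j^i$ and rewriting $E_k^i=\frac{1}{q-q^{-1}}\mathcal{E}_k^i$, one finds $[\mathcal{E}_j^i,f_k]=-(q-q^{-1})K_kK_j^{-1}\mathcal{E}_k^i f_j$, still divisible by $q-q^{-1}$ in $\mathcal{A}_q$. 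The paper simply records all four cases $c<i$, $c=i$, $i<c<j$, $j\le c$ and notes that in each one $[\mathcal{E}_j^i,f_c]=(q-q^{-1})[E_j^i,E_c^{n+1}]$ lies in $(q-q^{-1})\mathcal{A}_q$, hence $\mathcal{E}_j^i$ commutes with every $f_c$ in $\mathcal{A}_{-1}^{\sigma}$. With that correction your argument is complete and coincides with the paper's.
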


\begin{proof}
  We want to show that for any basis vector $v\in
  V_{-1}(0^n,\alpha)$ 
  and for $1 \leqslant i<j \leqslant n$, $\mathcal{E}_j^i v = 0$. We
  can write
  $v = f_1^{i_1} \ldots f_n^{i_n} v_0$ where $i_k = 0,1$.\\
  Using \cite{DW} Lemma 1 once more, if\\
  \centerline{\begin{tabular}{rl}
      $c< i$ then& $[E^i_j ,E^{n+1}_c] =0$ by \cite[Eq. (40)]{DW}\\
      $c=i$ then& $[E^i_j ,E^{n+1}_c] =-K_iK_j^{-1}E^{n+1}_j$ by
      \cite[Eq. (36)(a)]{DW}\\
      $i<c<j$ then& $[E^i_j ,E^{n+1}_c]
      =-(q-q^{-1})K_cK_j^{-1}E^i_cE^{n+1}_j$ by \cite[Eq. (43)(a)]{DW}\\
      $j\le c$ then& $[E^i_j ,E^{n+1}_c] =0$ by
      \cite[Eq. (37),(40)]{DW}.
  \end{tabular}}
In all cases, $[\mathcal{E}_j^i, f_c] =[\E^i_j ,\E^{n+1}_c]
=(q-q^{-1})[E^i_j ,E^{n+1}_c] =0$ in $\mathcal{A}_{-1}^{\sigma}$.
  So $\mathcal{E}_j^i v = f_1^{i_1} \ldots f_n^{i_n} (\mathcal{E}_j^i v_0)$. But
  $\mathcal{E}_j^i$ is a raising generator, so $\mathcal{E}_j^i v_0 =
  0$.  Using Corollary \ref{commute}, for $i \neq j$ $A_i$ and $A_j$
  commute in that representation.
\end{proof}

\subsection{$\check R^\gl$ makes sense when $q=-1$}
Here we intend to show that the non diagonal part $\check R^\gl$ of the universal $R$-matrix of $U_q\gl(n|1)$ supports evaluation at $q=-1$, which is not obvious given the formula defining $\check R^\gl$. 
In the bosonization $U_q\gl(n|1)^\sigma$, the universal $R$-matrix is given by 
$$
(R^\gl)^\sigma =D^\gl(\check R^\gl)^\sigma=D^\gl   {\prod_{i=1}^n\bp{\prod_{j=i+1}^{n}
\e_q(\E_j^i ⊗ \E_i^j)}(1+e_i⊗f_i)}.
$$

\begin{prop}\label{P:quasiR} For any $1 \leqslant i < j \leqslant n$, 
  $$\bp{\e_q(\E_j^i ⊗ \E_i^j)-1}V_q(0^n,\alpha)⊗V_q(0^n,\alpha)
  \subset (q+1)\Zq_{loc}V_q(0^n,\alpha)⊗V_q(0^n,\alpha)$$ where
  $\Zq_{loc}$ is the localization of $\Zq$ at $(q+1)$.  Hence
  $(R^\gl)^\sigma$ induces a well defined automorphism of
  $V_{-1}(0^n,\alpha)⊗ V_{-1}(0^n,\alpha)$ where the action of
  $(\check R^\gl)^\sigma$ is given by
  $$ (\check R^\gl)^{\sigma} =
  \prod_{i=1}^n (1 + e_i ⊗ f_i).
  $$
\end{prop}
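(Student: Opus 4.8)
The plan is to prove the containment
$$\bp{\e_q(\E_j^i ⊗ \E_i^j)-1}V_q(0^n,\alpha)⊗V_q(0^n,\alpha)
\subset (q+1)\Zq_{loc}V_q(0^n,\alpha)⊗V_q(0^n,\alpha)$$
by examining the individual terms of the exponential series. Write $\e_q(x)-1 = \sum_{k\ge 1}\frac{x^k}{(k)_q!}$ with $x = \E_j^i\otimes\E_i^j$. The key observation is that $\E_j^i$ is a raising generator and $\E_i^j$ a lowering generator acting on a $2^n$-dimensional module spanned by $\bigl(\prod_{c}f_c^{m_c}v_0\bigr)_{m_c\in\{0,1\}}$; since $(E_{n+1}^i)^2=(E_i^{n+1})^2=0$ and more generally the relevant PBW monomials only appear to the first power, one expects $x$ to act nilpotently on $V_q(0^n,\alpha)\otimes V_q(0^n,\alpha)$, so the series is actually a finite sum on this module. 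Thus only finitely many terms $\frac{x^k}{(k)_q!}$ need to be controlled.

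First I would pin down the nilpotency: show that on $V_q(0^n,\alpha)$, the operator $\E_i^j = (q-q^{-1})E_i^j$ already carries a factor of $(q-q^{-1})=(q-q^{-1})$, hence a factor divisible by $(q+1)$ up to a unit in $\Zq_{loc}$ (since $q-q^{-1} = (q+1)(q-1)/q$ and $q-1$, $q$ are units at $q=-1$). So each term $x^k$ for $k\ge 1$ is divisible by $(q+1)$ in $\Zq_{loc}$ — provided the denominators $(k)_q!$ do not reintroduce a pole at $q=-1$. Here is the point requiring care: $(2)_q = 1+q$ vanishes at $q=-1$, so $(k)_q!$ for $k\ge 2$ is divisible by $(q+1)$ and is \emph{not} invertible in $\Zq_{loc}$. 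The resolution is that $x^2$ acting on the module is divisible by $(q-q^{-1})^2$, which contributes a factor $(q+1)^2$, compensating the single $(q+1)$ in $(2)_q$; and one must check this balance persists for all $k$. Concretely, I would argue that $x^k$ applied to any basis vector lands in $(q-q^{-1})^{k}\Zq\cdot(\text{basis vectors})$ — actually better, that the only surviving terms have $x$ acting so that each application of $\E_i^j$ hits a distinct factor $f_i$-slot, forcing $x^k$ to actually vanish for $k$ exceeding the number of available slots, while for the small $k$ that survive, $v_q((q-q^{-1})^k/(k)_q!)\ge 0$ at $q=-1$, where $v_q$ denotes the $(q+1)$-adic valuation. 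A clean way: $(k)_q! = \prod_{m=1}^k \frac{1-q^m}{1-q}$ and $v_{q+1}((k)_q!) = \#\{m\le k : m \text{ even}\} = \lfloor k/2\rfloor$, whereas $v_{q+1}((q-q^{-1})^k) = k$ since $q-q^{-1}$ has a simple zero at $q=-1$; since $k\ge \lfloor k/2\rfloor + 1$ for $k\ge 1$, each term with $k\ge 1$ lies in $(q+1)\Zq_{loc}\cdot(\text{module})$, and the sum of the finitely many surviving terms does too.

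Next, granting the containment, I would deduce the second assertion. Reducing $(R^\gl)^\sigma$ modulo $(q+1)$: the factors $\e_q(\E_j^i\otimes\E_i^j)$ for $i<j$ all reduce to $1$ by the containment just proved, acting on $V_{-1}(0^n,\alpha)\otimes V_{-1}(0^n,\alpha)$; the factors $(1+e_i\otimes f_i)$ have integral coefficients and reduce directly. Hence $(\check R^\gl)^\sigma$ reduces to $\prod_{i=1}^n(1+e_i\otimes f_i)$, which is manifestly a well-defined automorphism (each factor $1+e_i\otimes f_i$ is invertible with inverse $1-e_i\otimes f_i$ since $(e_i\otimes f_i)^2 = e_i^2\otimes f_i^2 = 0$, using $e_i^2 = f_i^2 = 0$ in $A_i\cong U$). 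Together with the fact that $D^\gl = q^{\sum_{i\le n}E_i^i\otimes E_i^i - E_{n+1}^{n+1}\otimes E_{n+1}^{n+1}}$ acts diagonally and specializes fine at $q=-1$ on weight modules, this shows $(R^\gl)^\sigma$ induces a well-defined automorphism of $V_{-1}(0^n,\alpha)\otimes V_{-1}(0^n,\alpha)$.

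I expect the main obstacle to be the bookkeeping in the first step: making precise the claim that on the module $V_q(0^n,\alpha)$, the operator $\E_j^i\otimes\E_i^j$ is nilpotent with each power $x^k$ divisible by $(q-q^{-1})^k$ in the integral form, so that the $(q+1)$-adic valuations genuinely cancel the denominators $(k)_q!$. This uses the PBW basis structure from \cite[Lemma 1]{DW} and the fact that $\E_i^j = (q-q^{-1})E_i^j$ contributes the extra power of $q-q^{-1}$ by definition of the integral form $\A_q$; one should also verify that no hidden denominators enter from reordering $\E_j^i$ past the $f_c$'s, but Proposition \ref{rep} already shows those commutators vanish at $q=-1$, so they contribute further factors of $(q+1)$ rather than poles. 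Once the valuation inequality $v_{q+1}((q-q^{-1})^k/(k)_q!) \ge 1$ for $k\ge 1$ is in hand, the rest is formal.
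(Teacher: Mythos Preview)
Your valuation-counting strategy is exactly the paper's argument, just phrased globally rather than as an induction: from Proposition~\ref{rep} one has $\E_j^i V\subset (q+1)V$ (with $V=\Zq_{loc}V_q(0^n,\alpha)$), hence $(\E_j^i)^k V\subset (q+1)^k V$, while $v_{q+1}\bigl((k)_q!\bigr)=\lfloor k/2\rfloor$, so each term $\frac{(\E_j^i\otimes\E_i^j)^k}{(k)_q!}$ for $k\ge 1$ lands in $(q+1)^{\lceil k/2\rceil}V\otimes V\subset (q+1)V\otimes V$. The deduction of the second assertion is also the same.

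One correctable slip: in the integral form it is the \emph{raising} generator that is rescaled, namely $\E_j^i=(q-q^{-1})E_j^i$ for $i<j$, whereas $\E_i^j=E_i^j$ carries no extra factor. So your sentence ``the operator $\E_i^j=(q-q^{-1})E_i^j$ already carries a factor of $(q-q^{-1})$'' is pointed at the wrong tensor leg. This matters because if you want the $(q-q^{-1})^k$ to come from the built-in scalar, you then need the unscaled $E_j^i$ to preserve $V_q$, a fact you do not verify. The paper sidesteps this entirely by taking as sole input the statement $\E_j^i V\subset (q+1)V$ from Proposition~\ref{rep} (which you do cite, but only in passing for commutator terms); with that input the valuation bookkeeping you wrote down goes through without ever unpacking $\E_j^i$ as $(q-q^{-1})E_j^i$, and without any separate nilpotency discussion beyond finite-dimensionality.
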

\begin{proof}
  Define $V=\Zq_{loc}V_q(0^n,\alpha)\subset V(0^n, \alpha)$ so that
  $V_{-1}(0^n,\alpha)\cong V⊗_{q=-1}\C$.  We wish to prove that
  for $1 \leqslant i < j \leqslant n$, in the representation $V⊗V$,
  $\e_q(\E_j^i ⊗ \E_i^j) = 1$ mod $(q+1)$.
  Set $1 \leqslant i < j \leqslant n$. We show by induction on 
  $k \geqslant 1$, that 
  $$\frac{(\E_j^i)^k}{(k)_q !}V \subset (q+1)V.$$ 
  For $k=1$, it follows from $\mathcal{E}_j^i \in I$ (see Proposition
  \ref{rep}). Now we suppose it holds for any $l \in \{1, \ldots ,
  k-1 \}$ and since $\frac{(\E_j^i)^k}{(k)_q
    !}=\frac{(\E_j^i)^{k-1}}{(k-1)_q !}\frac{\E_j^i}{(k)_q}$ it is
  enough to show that $\frac{\E_j^i}{(k)_q}V\subset V$.

  We know that $\E_j^i V\subset (q+1) V$, so $\frac{\E_j^i}{(k)_q}V
  \subset \frac{q+1}{(k)_q} V$.\\
  If $k$ is even, $(k)_q = (q+1) (\frac{k}{2})_{q^2}$ with
  $(\frac{k}{2})_{q^2}=\frac{k}{2}$ mod $(q+1)$ so $\displaystyle{
    \frac{\E_j^i}{(k)_q} V\subset\frac1{(\frac{k}{2})_{q^2}}V=V.}$ If
  $k$ is odd, $(k)_{q} = 1$ mod $(q+1)$ and therefore
  $\displaystyle{\frac{\E_j^i}{(k)_q} V\subset(q+1)V.}$ This concludes
  the proof.
\end{proof}
\section{Links-Gould invariants and the conjecture}

\subsection{Links-Gould invariants $LG^{n,1}$}
The Links-Gould invariants $LG^{n,1}$ are framed link invariants
obtained by applying the modified (one has to use a modified trace, see
\cite{GP}) Reshetikhin-Turaev construction to the ribbon Hopf algebras
$U_q\gl(n|1)^{\sigma}$ we just studied. Like in the Alexander case,
the $R$-matrix can be divided by the value of the twist so that
$LG^{n,1}$ becomes an \emph{unframed} link invariant. Note that this
definition and Viro's work \cite{Vi} show that the first $LG$
invariant $LG^{1,1}$ coincides with the Alexander-Conway polynomial
$\Delta$.

There are several sets of variables used in papers studying $LG$
invariants. Three of them appear regularly: $(t_0,t_1)$, $(\tau, q)$
and $(q^\alpha, q)$. Each set can be expressed in terms of the others
using the following defining relations:
$$
t_0 = q^{-2 \alpha}\text{, }t_1 = q^{2 \alpha +2}\text{,}
$$
$$
\tau = t_0^{1/2} = q^{- \alpha}.
$$

In the case of $LG^{2,1}$, variables $(t_0,t_1)$ nicely lead to a
symmetric Laurent polynomial that has all sorts of Alexander-type
properties \cite{Ish}.

Here we are interested in what happens to $LG^{n,1}$ when you evaluate
$q$ at $-1$, or in other words when you set $t_0 t_1 = 1$.
\subsection{Proof of the conjecture}
Our study of ribbon Hopf algebra $U_q\gl(n|1)^{\sigma}$ allows us to
prove the following, that was conjectured in \cite{DWIL}:

\begin{theo}\label{main}
  For any link $L$ in $S^3$, $LG^{n,1}(L;\tau,-1) =
  \Delta_L(\tau^2)^n$.  This can be translated in variables
  $(t_0,t_1)$:
  $$
  LG^{n,1}(L;t_0,t_0^{-1}) = \Delta_L(t_0)^n.
  $$
\end{theo}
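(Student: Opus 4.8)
The plan is to prove that, after evaluation at $q=-1$, the whole Reshetikhin--Turaev datum carried by the ribbon Hopf algebra $U_q\gl(n|1)^\sigma$ on its representation $V_{-1}(0^n,\alpha)$ splits as an $n$--fold tensor power of the $\slt$--model datum of Section~\ref{S:QUantSL2H}, which computes the Alexander polynomial. The two structural inputs are the algebra map $\Theta\colon\bigotimes_{i=1}^nA_i\to\mathcal A_{-1}^\sigma/I$ of Proposition~\ref{rep}, which lets one treat $V_{-1}(0^n,\alpha)$ as a module over $n$ commuting copies of $U$, and Proposition~\ref{P:quasiR}, which makes the off--diagonal part $(\check R^\gl)^\sigma$ of the universal $R$--matrix collapse to $\prod_i(1+e_i\otimes f_i)$ at $q=-1$.

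The first step is to identify $V_{-1}(0^n,\alpha)$ as a $\bigotimes_iA_i$--module. Writing a basis as $\prod_if_i^{m_i}v_0$ with $m_i\in\{0,1\}$, the Cartan generators $E^i_i$ (for $i\le n$) and $E^{n+1}_{n+1}$ act by $-m_i$ and $\alpha+|m|$ respectively, so $k_i=K_iK_{n+1}^{-1}\sigma$ acts by $q^{-m_i}q^\alpha\cdot\big(q^{|m|}(-1)^{|m|}\big)$; the crucial observation is that the $m$--dependent parenthesis equals $1$ at $q=-1$, so $k_i$ then acts by the scalar $(-1)^{m_i}a$ with $a=\textbf{i}^{2\alpha}$, depending only on $m_i$. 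Since $e_i,f_i$ also act (up to signs from permuting odd generators) in the $i$--th slot only, and since $a^2=e^{2\textbf{i}\pi\alpha}\neq1$ forces the action to factor through the semisimple algebra $\big(U/(k^2-a^2)\big)^{\otimes n}$, one gets $V_{-1}(0^n,\alpha)\cong V_a^{\otimes n}$ as a $\bigotimes_iA_i$--module, with $V_a$ the two--dimensional module of~\eqref{eq:repU}. Setting $\alpha'=2\alpha-1$ (not an odd integer, as $\alpha\notin\Z$), $V_a$ is precisely the restriction to $U_0\cong U$ of the $\slt$--model module $V_{\alpha'}$, and $t^{1/2}=\textbf{i}^{-\alpha'-1}=\textbf{i}^{-2\alpha}=\tau$ at $q=-1$, so $t=\tau^2$.

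The second step compares braidings on $V_{-1}(0^n,\alpha)^{\otimes2}$, regrouped as $\big(V_a^{\otimes2}\big)^{\otimes n}$. By Proposition~\ref{P:quasiR} and Corollary~\ref{commute}, $(\check R^\gl)^\sigma=\prod_i(1+e_i\otimes f_i)$ becomes the tensor product $\bigotimes_i(1+e\otimes f)$, i.e.\ the $n$--fold tensor power of the off--diagonal part of $\RH$. For the diagonal part one writes $R_1D^\gl$ out on the basis above: the $\slt$--diagonal parts $\textbf{i}^{H\otimes H/2}$ of the $n$ factors, multiplied together, differ from $R_1D^\gl$ only by cross terms $\sum_{i\neq j}m_im_j'$ (coming from $E^{n+1}_{n+1}\otimes E^{n+1}_{n+1}$), and these cancel against the parity sign $(-1)^{|m||m'|}$ supplied by $R_1$; what remains is an overall scalar $s_0$ independent of the basis vectors (Lemmas~\ref{L:DH} and~\ref{lem}, through the density argument of Proposition~\ref{cor}, account structurally for the appearance of this scalar). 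Hence $\tau(R^\gl)^\sigma=s_0\,\bigotimes_i\big(\tau\RH|_{V_{\alpha'}^{\otimes2}}\big)$, so after regrouping the $n\ell$ tensor factors of $V_{-1}(0^n,\alpha)^{\otimes\ell}$ as $\big(V_{\alpha'}^{\otimes\ell}\big)^{\otimes n}$ the braid group representation attached to $LG^{n,1}$ becomes $\lambda^{e(b)}\bigotimes_i\Psi_{V_{\alpha'}^{\otimes\ell}}(b)$, where $e(b)$ is the exponent sum and $\lambda=s_0\,\theta_{\alpha'}^{\,n}/\theta$ with $\theta$ the twist of $V(0^n,\alpha)$. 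A direct computation with the explicit (group--like) pivotal element of $U_q\gl(n|1)^\sigma$ shows it acts on $V_{-1}(0^n,\alpha)\cong V_a^{\otimes n}$ as a scalar $\nu$ times $\bigotimes_ik_i^{-1}$, matching the $\slt$--model pivot $K^{-1}$ on each $V_{\alpha'}$, so the partial--trace formula defining $LG^{n,1}$ factors through $\Theta$ as the $n$--fold tensor power of the one recalled in Section~\ref{S:QUantSL2H}. For $L=\hat b$ with $b\in B_\ell$ this gives $LG^{n,1}(L;\tau,-1)=\lambda^{e(b)}\,\nu^{\ell-1}\,\Delta_L(t)^n$; since the left side and $\Delta_L(\tau^2)^n$ are both \emph{unframed} link invariants, evaluating on a couple of presentations of the unknot (or computing $\theta$ and the pivot outright) forces $\lambda=\nu=1$, whence $LG^{n,1}(L;\tau,-1)=\Delta_L(\tau^2)^n$, equivalently $LG^{n,1}(L;t_0,t_0^{-1})=\Delta_L(t_0)^n$.

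I expect the main obstacle to be the diagonal--part computation: verifying that the coupling between the $n$ copies of $U$ inside $U_q\gl(n|1)^\sigma$ — visible as the cross terms $\sum_{i\neq j}m_im_j'$ from $D^\gl$ together with the non--tensor part of the $k_i$--action — genuinely disappears at $q=-1$ once combined with the $\sigma$/parity contribution of $R_1$. This is the concrete incarnation of the specialization $t_0t_1=1$ ``decoupling'' the factors, and it is the one place where the argument really uses $q=-1$ rather than a generic $q$. The surrounding work — the sign bookkeeping hidden in the isomorphism $V_{-1}(0^n,\alpha)\cong V_a^{\otimes n}$ and in the bosonization, and pinning down the pivot and twist scalars — is routine but has to be carried out carefully.
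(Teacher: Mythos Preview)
Your proposal is correct and follows the same overall architecture as the paper: identify $V_{-1}(0^n,\alpha)$ with a tensor power of copies of the $2$--dimensional $U$--module via $\Theta$ (the paper's Proposition~\ref{P:Vfactor}), collapse $(\check R^\gl)^\sigma$ to $\prod_i(1+e_i\otimes f_i)$ by Proposition~\ref{P:quasiR}, match the diagonal part and the pivot, and conclude that the partial--trace formula factors as an $n$--fold power.

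The one genuine methodological difference is in the step you flag as the main obstacle. You propose a direct computation of $R_1D^\gl$ on basis vectors, tracking the cross terms $\sum_{i\neq j}m_im_j'$ from $E^{n+1}_{n+1}\otimes E^{n+1}_{n+1}$ and cancelling them against the parity sign from $R_1$. The paper bypasses this entirely with Proposition~\ref{P:conjDgl}: it shows that conjugation by $D^\gl$ on each $A_i\otimes A_i$ is exactly the automorphism $\D$ of $U\otimes U$, the same automorphism induced by each $D^H=\ii^{H\otimes H/2}$ (Lemma~\ref{L:DH}); density over $\bigotimes_iA_i$ then forces $D^\gl$ and $\prod_i\iota_i(D^i)$ to be proportional in the representation, with no explicit cross--term bookkeeping needed. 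This is cleaner and also makes transparent why the argument works: the $n$ copies of $U$ are only coupled through the Cartan part, and that coupling is invisible to conjugation. Similarly, where you carry an undetermined scalar $\nu$ on the pivot and kill it by evaluating on unknot presentations, the paper's Proposition~\ref{P:pivot} checks directly that $K_{2\rho}^\sigma$ and $\Theta(\otimes_i\phi_i)$ agree on the highest weight vector, so $\nu=1$ from the start; the remaining proportionality of braidings is then absorbed automatically by the twist rescaling that makes both invariants framing--independent.
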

The rest of the section is devoted to proving this identity. First we
identify $V_{-1}(0^n, \alpha)$ as a $⊗_iA_i$-module:
\begin{prop}\label{P:Vfactor}
  Equipped with the action of $⊗_iA_i$ induced by
  $\Theta:⊗_iA_i\to\A_{-1}^{\sigma}/I$, $V_{-1}(0^n, \alpha)$ is isomorphic to
  the irreducible representation $⊗_iV^i$ where each $V^i$ is an
  $A_i$-module isomorphic to the 2-dimensional $U$-module
  $V_{q^{-\alpha}}$.
\end{prop}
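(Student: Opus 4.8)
The plan is to prove Proposition \ref{P:Vfactor} by combining three ingredients already at our disposal: the factorization map $\Theta$ from Proposition \ref{rep}, the isomorphism $A_i\simeq U$ from the Proposition preceding Corollary \ref{commute}, and the PBW basis $\bigl(\prod_i f_i^{m_i}v_0\bigr)_{m_i\in\{0,1\}}$ of $V_{-1}(0^n,\alpha)$. The first step is to record that, by Corollary \ref{commute} and Proposition \ref{rep}, the images $\Theta(A_i)$ pairwise commute inside $\A_{-1}^\sigma/I$ acting on $V_{-1}(0^n,\alpha)$, so this action factors through a genuine $\bigotimes_i A_i$-module structure; this is exactly what makes the statement meaningful. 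Next I would exhibit the candidate isomorphism: since each $A_i$ is generated by $e_i=-\mathcal{E}_{n+1}^i\sigma$, $f_i=\mathcal{E}_i^{n+1}$, $k_i=K_iK_{n+1}^{-1}\sigma$, and $v_0$ is a highest weight vector killed by all raising generators, the line $\C v_0$ is stable under each $e_i$ (it is sent to $0$) and under each $k_i$ (on which $K_a$ acts by $q^0=1$ for $a\le n$ and $K_{n+1}$ by $q^{-\alpha}$, so $k_i$ acts by $q^{\alpha}\cdot(\pm1)$, a scalar). Hence $v_0$ generates a copy of the $1$-dimensional "highest line" in each $V_{q^{-\alpha}}$, and applying the $f_i$ in all $2^n$ ways produces the PBW basis; this gives a surjective $\bigotimes_i A_i$-linear map $\bigotimes_i V_{q^{-\alpha}}\twoheadrightarrow V_{-1}(0^n,\alpha)$, which is an isomorphism by dimension count ($2^n=2^n$).

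The one point needing genuine care is the identification of each tensor factor with the \emph{specific} $U$-module $V_{q^{-\alpha}}=V_a$ with $a=q^{-\alpha}$, rather than merely with some $2$-dimensional $U$-module. Here I would compute the scalar by which the central element $e_if_i+f_ie_i$ of $A_i$ acts. From the Proposition identifying $A_i\simeq U$ we have $e_if_i-f_ie_i=k_i-k_i^{-1}$, and a short computation (using $e_i^2=f_i^2=0$, which follows from $(\mathcal{E}_{n+1}^i)^2=(\mathcal{E}_i^{n+1})^2=0$) shows $e_if_i+f_ie_i$ is central and, on the highest line $\C v_0$ where $f_iv_0\ne0$ but $f_i^2v_0=0$, it acts by the eigenvalue of $e_if_i$ on $f_iv_0$, i.e.\ by $k_i-k_i^{-1}$ evaluated on $v_0$. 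Since $k_i$ acts on $v_0$ by $q^{-E_{n+1}^{n+1}}\cdot\sigma = q^{\alpha}(\pm1)$ and we work at $q=-1$, this is $a-a^{-1}$ with $a=q^{-\alpha}=(-1)^{-\alpha}=\textbf{i}^{\,2\alpha}$; comparing with equation \eqref{eq:repU}, where $ef+fe$ acts by $(a-a^{-1})I_2$ on $V_a$, pins down the isomorphism type as claimed. (The sign $(\pm1)$ from $\sigma$ only affects whether we name the module $V_a$ or $V_{-a}$, and by the conventions fixed around the representation $V(\alpha,a,2j,\varepsilon,J)$ and the relation $a=q^{-\alpha}$ this is precisely $V_{q^{-\alpha}}$.)

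Finally, irreducibility of $\bigotimes_i V^i$ as a $\bigotimes_i A_i$-module is automatic: each $V^i\cong V_{q^{-\alpha}}$ is irreducible over $A_i\cong U$ (it is one of the two irreducibles $V_{\pm a}$ of the $8$-dimensional semisimple quotient $U/(k^2-g)$ described in \S\ref{S:U}), and a tensor product of irreducibles over a tensor product of algebras over an algebraically closed field is irreducible. Equivalently, irreducibility is inherited from that of $V_{-1}(0^n,\alpha)$ itself, which is given. The \textbf{main obstacle} is the bookkeeping of scalars in the previous paragraph — tracking the value of $K_{n+1}$, the factor $\sigma$, and the specialization $q=-1$ simultaneously, so that the tensor factor is identified with the correct module $V_{q^{-\alpha}}$ and not merely up to the sign ambiguity $a\leftrightarrow-a$; everything else is a direct consequence of PBW and the already-established commutation and factorization statements.
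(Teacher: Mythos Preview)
Your proposal is correct and follows essentially the same route as the paper: both arguments hinge on observing that $v_0$ is a simultaneous highest weight vector for all the $A_i$ (annihilated by each $e_i$, eigenvector for each $k_i$), and then conclude by a dimension count $2^n=2^n$. The only cosmetic difference is packaging: the paper invokes the semisimplicity of $\bigotimes_i\bigl(A_i/(k_i^2-a^2)\bigr)$ to place $v_0$ in a summand of the form $\bigotimes_i V^i$, whereas you build the map explicitly and compute the central element $e_if_i+f_ie_i$ to pin down the factor; both reach the same identification with the same amount of work.
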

\begin{proof}
  By $⊗_iV^i$, we mean the representation 
  $$⊗_i\rho_i:⊗_iA_i\to⊗_i\End_\C(V^i)\cong\End_\C(⊗_iV^i)\text{ where }\rho_i:A_i\to\End_\C(V^i)$$
Set $a=q^{-\alpha}$. For each $i$, $k_i^2$ acts by
  $a^2$ on $V_{-1}(0^n, \alpha)$.  Thus $V_{-1}(0^n, \alpha)$ is a
  representation of the $8^n$-dimensional semi-simple algebra
  $\bigotimes_{i=1}^j\bp{A_i/(k_i^2-a^2)}$.  But for each
  $A_i$, $v_0$ is a highest weight vector of weight $a$. So it belongs to a
  summand of the $\bigotimes_{i=1}^j\bp{A_i/(k_i^2-a^2)}$-module
  $V_{-1}(0^n, \alpha)$ of the form $⊗_iV^i$.  Comparing the
  dimensions which are equal to $2^n$ for both vector spaces, we have
  that $V_{-1}(0^n, \alpha)\simeq ⊗_iV^i$.
\end{proof}

Now we study the action of the pivotal element of $\A_q^{\sigma}$ in
the representation at $q=-1$.
\begin{prop}\label{P:pivot}
  If $K_{2\rho}^\sigma$ is the pivotal element of $\A_q^{\sigma}$, in the
  representation $V_{-1}(0^n, \alpha)$,
  $$
  K_{2\rho}^\sigma  = \Theta(⊗_i \phi_i)
  $$
  where $\phi_i = k_i^{-1} \in A_i$.
\end{prop}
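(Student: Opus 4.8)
The plan is to compute both sides in a way that reduces the statement to a fact about the rank-one algebra $U$ together with the already-established factorization $V_{-1}(0^n,\alpha)\simeq\bigotimes_i V^i$ from Proposition~\ref{P:Vfactor}. The pivotal element $K_{2\rho}^\sigma$ of $\A_q^\sigma$ is, by the general bosonization recipe (see the remark after Majid's theorem), $\sigma K_{2\rho}$ where $K_{2\rho}$ is the pivotal element of $\A_q$, i.e. the group-like element implementing $S^2$ on $U_q\gl(n|1)$. For $\gl(n|1)$ the half-sum of positive roots gives $K_{2\rho}=\prod_{a\in\I} K_a^{c_a}$ for explicit integers $c_a$ (read off from the standard formula, as in Zhang--De Wit); the key point is only that $K_{2\rho}$ is a monomial in the $K_a$'s. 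First I would write this monomial out and regroup the factors. Since $k_i = K_iK_{n+1}^{-1}\sigma$, a product $\prod_i k_i^{-1}$ unfolds to a monomial in the $K_a$'s times a power of $\sigma$, and the task becomes to check that after collecting $\sigma$-signs and exponents this equals $\sigma K_{2\rho}$.

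Concretely, I would carry out the following steps. (1) Record the precise form of $K_{2\rho}$ for $\gl(n|1)$ from the PBW/half-sum-of-roots data. (2) Expand $\Theta(\bigotimes_i\phi_i)=\prod_i k_i^{-1}=\prod_i\bigl(\sigma K_{n+1}K_i^{-1}\bigr)$, using that the $k_i$ are group-like-ish enough that the product is literally a product of these monomials in $\A_q^\sigma$; collect the $n$ copies of $\sigma$ into $\sigma^n$ and the $K$-part into $K_{n+1}^{\,n}\prod_{i\le n}K_i^{-1}$. (3) Compare with $\sigma K_{2\rho}$: here is where the hypothesis $q=-1$ does real work. On $V_{-1}(0^n,\alpha)$ we have $E_i^i=0$ for $i\le n$, hence $K_i=q^{E_i^i}=q^0=1$ for $i\le n$, and $K_{n+1}=q^{-\alpha}$ is a scalar. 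So both $\Theta(\bigotimes_i\phi_i)$ and $\sigma K_{2\rho}$ act on $V_{-1}(0^n,\alpha)$ as $\sigma^{\epsilon}$ times an explicit scalar power of $q^{-\alpha}$, and matching the parity $\sigma^n$ against the $\sigma$ coming from the bosonized pivot, together with matching the scalar, finishes the identification. The equality is an equality of operators on $V_{-1}(0^n,\alpha)$ (that is all the statement claims), so I only need agreement after applying $\rho_{V_{-1}(0^n,\alpha)}$, which is what makes the reduction to scalars legitimate.

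There is one genuinely delicate bookkeeping point, which I expect to be the main obstacle: the $\sigma$-parity. On the $\gl(n|1)$ side the "pivot" of the superalgebra is $K=\phi$ (as listed for $U_q\gl(1|1)$, and analogously $K_{2\rho}$ here), and bosonization multiplies it by $\sigma$; on the $\bigotimes_i A_i$ side each local pivot is $\phi_i=k_i^{-1}$ which \emph{already} contains one factor of $\sigma$, so the product contributes $\sigma^n$. These must agree as operators, and since $\sigma^2=1$ this is a parity condition: one must check $\sigma^n$ and the bosonized $\sigma K_{2\rho}$ carry the same $\Z/2\Z$-grading action on every weight vector $\prod_i f_i^{m_i}v_0$. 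I would verify this by computing $[E_{n+1}^{n+1}]$-type parities of the basis vectors and using $[a]=0$ for $a\le n$, $[n+1]=1$; the scalar part ($q^{-\alpha}$-powers) then matches automatically because $D^\gl$ and the $k_i$ are built from the same Cartan data. Modulo this sign check, everything else is the routine monomial comparison sketched above, performed inside $\A_{-1}^\sigma/I$ and then evaluated on $V_{-1}(0^n,\alpha)$.
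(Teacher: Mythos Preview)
Your strategy --- write both $K_{2\rho}^\sigma$ and $\Theta(\bigotimes_i\phi_i)$ as monomials in the $K_a$ and $\sigma$, then compare their action on $V_{-1}(0^n,\alpha)$ --- can be made to work, but the execution contains a genuine error. You assert that ``on $V_{-1}(0^n,\alpha)$ we have $E_i^i=0$ for $i\le n$, hence $K_i=1$, and $K_{n+1}=q^{-\alpha}$ is a scalar.'' This is false: the condition $E_i^i v_0=0$ holds \emph{only on the highest weight vector}. The lowering operators $f_j=E_j^{n+1}$ have weight $\varepsilon_{n+1}-\varepsilon_j$, so on the basis vector $\prod_j f_j^{m_j}v_0$ one has $E_i^i$ acting by $-m_i$ (for $i\le n$) and $E_{n+1}^{n+1}$ acting by $\alpha+\sum_j m_j$. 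Thus $K_i$ acts by $(-1)^{m_i}$ and $K_{n+1}$ by $q^{-\alpha}(-1)^{\sum_j m_j}$ at $q=-1$; neither is scalar. Your reduction of both sides to ``$\sigma^\epsilon$ times a scalar power of $q^{-\alpha}$'' therefore collapses, and the parity bookkeeping you flag as the only delicate point is not where the difficulty lies.

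The direct computation is salvageable: tracking the genuine weight-space eigenvalues, both $\sigma K_{2\rho}=\sigma K_{n+1}^{\,n}\prod_i K_i^{n-2i}$ and $\prod_i k_i^{-1}=\sigma^n K_{n+1}^{\,n}\prod_i K_i^{-1}$ act on $\prod_j f_j^{m_j}v_0$ by $q^{-n\alpha}(-1)^{\sum_i m_i}$, after collecting signs (the key cancellation is that $n-2i$ and $-1$ have the same parity when paired against $(-1)^{m_i}$, and the $\sigma$ and $K_{n+1}$ contributions conspire). But this is a different and longer argument than what you wrote. The paper avoids this weight-by-weight check entirely: it verifies that conjugation by $K_{2\rho}^\sigma$ and by $\Theta(\bigotimes_i\phi_i)$ agree on each $e_i$ (both send $e_i\mapsto -e_i$ at $q=-1$), invokes the irreducibility of $V_{-1}(0^n,\alpha)$ as a $\bigotimes_i A_i$-module together with the density theorem to conclude the two operators are proportional, and then checks the scalar equals $1$ by evaluating on $v_0$ alone. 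That route sidesteps the sign bookkeeping altogether.
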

\begin{proof}
  The antipode of $U_q\gl(n|1)$ satisfies
  $S(E^i_{i+1})=-E^i_{i+1}K_{i+1}K_{i}^{-1}$ and
  $S^2(E^i_{i+1})=K_{i}K_{i+1}^{-1}E^i_{i+1}K_{i+1}K_{i}^{-1}=K_{2\rho}E^i_{i+1}K_{2\rho}^{-1}$.
  We can write $K_{2\rho}$ in terms of Cartan generators:
  $$
  K_{2\rho} =K_{n+1}^{n} \prod_{i=1}^n K_i^{n-2i}.
  $$
  Denoting $\langle a|b\rangle := \sum_{i=1}^n a_i b_i - a_{n+1}
  b_{n+1}$, and $\rho$ the graded half sum of all positive roots, we
  find:
  $$
  2 \rho = \sum_{i=1}^n (n-2i)\varepsilon_i + n \varepsilon_{n+1},
  $$
  where $\varepsilon_i$ is the $i^{th}$ basis vector of
  $\mathbb{C}^{n+1}$ and we write any vector $x = \sum_{i=1}^{n+1} x_i
  \varepsilon_i$ in this basis. $K_{2\rho}$ conjugates element $e_i \in A_i$
  as follows:
\begin{alignat*}{2}
   K_{2\rho} e_i K_{2\rho}^{-1} & = & q^{\langle 2 \rho| \varepsilon_i - \varepsilon_{n+1}\rangle} e_i \\
   & = & q^{(n-2i+n)} e_i \\
   & = & q^{2n-2i} e_i.
\end{alignat*}
So if $q=-1$, 
\begin{alignat*}{2}
   \sigma K_{2\rho} e_i K_{2\rho}^{-1}\sigma & = & -e_i\\
   & = & \phi_i e_i \phi_i^{-1} \\
   & = & \Theta(⊗_j \phi_j) e_i \Theta(⊗_j \phi_j^{-1}).
\end{alignat*}
Similarly to Proposition \ref{cor}, we therefore can say that in the
irreducible $⊗_iA_i$-module $V_{-1}(0^n, \alpha)$, $K_{2\rho}^\sigma$
is a scalar multiple of $\Theta(⊗_j \phi_j)$. We call this element $\lambda$. Since the two maps
both act by $q^{n\alpha}$ 
on the highest
weight vector, we conclude that $\lambda = 1$.
\end{proof}

\begin{prop}\label{P:conjDgl} For any $x\in A_i⊗ A_i\subset\A_q⊗\A_q$,
  we have
  $$D^\gl x(D^\gl)^{-1}=\D(x)$$
  where we identified $A_i⊗ A_i\cong U⊗U$.
\end{prop}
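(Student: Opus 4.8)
The plan is to check the identity on algebra generators and then conclude by multiplicativity. Both $\D$ and the conjugation $x \mapsto D^\gl x (D^\gl)^{-1}$ are algebra homomorphisms, and under $A_i \otimes A_i \cong U \otimes U$ the left-hand algebra is generated by $e_i \otimes 1$, $f_i \otimes 1$, $k_i^{\pm1} \otimes 1$ together with their images $1 \otimes e_i$, $1 \otimes f_i$, $1 \otimes k_i^{\pm1}$ under the flip $\tau$. Since $D^\gl$ is manifestly $\tau$-symmetric and $\D$ satisfies $\D \circ \tau = \tau \circ \D$ by its very definition in Section~\ref{S:U}, it suffices to treat $e_i \otimes 1$, $f_i \otimes 1$ and $k_i \otimes 1$; checking the identity for these three elements also shows along the way that conjugation by $D^\gl$ preserves $A_i \otimes A_i$.

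The first step is to record the $E_l^l$-weights of $e_i$, $f_i$, $k_i$ for the adjoint action of the Cartan generators of $\A_q$. For $k_i = K_i K_{n+1}^{-1}\sigma$ one has $[E_l^l, k_i] = 0$ for all $l$, since the $K_a$ and $\sigma$ commute with every $E_l^l$. For $e_i = -\mathcal{E}_{n+1}^i \sigma$ I would use the reordering algorithm of \cite[Lemma~1]{DW} — equivalently, write $E_{n+1}^i$ as an iterated $q$-commutator of the simple root vectors $E_{i+1}^i, E_{i+2}^{i+1}, \dots, E_{n+1}^n$ — so that, each $E_{c+1}^c$ having $E_l^l$-weight $\delta_{l,c} - \delta_{l,c+1}$, the weights telescope: $[E_l^l, E_{n+1}^i] = (\delta_{l,i} - \delta_{l,n+1}) E_{n+1}^i$, and hence $[E_l^l, e_i] = (\delta_{l,i} - \delta_{l,n+1}) e_i$; dually $[E_l^l, f_i] = (\delta_{l,n+1} - \delta_{l,i}) f_i$. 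Now $D^\gl$ is — up to the $R_1$-prefactor carried by the bosonization — the exponential $q^{\sum_{l \le n} E_l^l \otimes E_l^l - E_{n+1}^{n+1} \otimes E_{n+1}^{n+1}}$ of pairwise commuting elements, and for a weight vector $a$ with $[E_l^l, a] = c_l a$ one has $(E_l^l \otimes E_l^l)(a \otimes 1) = (a \otimes 1)\big((E_l^l + c_l) \otimes E_l^l\big)$, so this exponential conjugates $a \otimes 1$ to $a \otimes q^{\sum_{l \le n} c_l E_l^l - c_{n+1} E_{n+1}^{n+1}}$. Substituting the three weight vectors gives respectively $e_i \otimes q^{E_i^i + E_{n+1}^{n+1}} = e_i \otimes K_i K_{n+1}^{-1}$, then $f_i \otimes K_i^{-1} K_{n+1}$, then $k_i \otimes 1$.

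It remains to push these through the $R_1$-part of the bosonized $D^\gl$, which by Lemma~\ref{lem} acts as the twist $x \otimes y \mapsto \sigma^{|y|} x \otimes y \sigma^{|x|}$. Using $K_i K_{n+1}^{-1} = k_i \sigma$, $K_i^{-1} K_{n+1} = k_i^{-1} \sigma$, the degrees $|e_i| = |f_i| = 1$, $|k_i| = 0$ in $\Z/2\Z$, and $\sigma^2 = 1$, this sends $e_i \otimes k_i \sigma$ to $e_i \otimes k_i$, $f_i \otimes k_i^{-1}\sigma$ to $f_i \otimes k_i^{-1}$, and fixes $k_i \otimes 1$ — which are exactly $\D(e \otimes 1)$, $\D(f \otimes 1)$, $\D(k \otimes 1)$, whence the claim. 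The only point needing care is this last $\sigma$-bookkeeping: one must retain the $R_1$ prefactor of the bosonized $D^\gl$ (mirroring $D^\sigma = R_1 D'$ in the $\gl(1|1)$ model of Lemma~\ref{lem}), since the Cartan rescaling alone does not even land in $A_i \otimes A_i$; once that is accounted for, the rest is a direct computation.
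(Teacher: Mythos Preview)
Your proposal is correct and follows essentially the same strategy as the paper's proof: reduce to the generators $e_i\otimes 1$, $f_i\otimes 1$, $k_i\otimes 1$ by multiplicativity, compute the Cartan weights to see how the exponential $q^{\sum_{l\le n}E_l^l\otimes E_l^l-E_{n+1}^{n+1}\otimes E_{n+1}^{n+1}}$ conjugates each of them, and then invoke the $\tau$-symmetry of $D^{\gl}$ together with $\D\circ\tau=\tau\circ\D$ to cover the flipped generators.

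The one genuine difference is that you are more scrupulous about the $\sigma$-bookkeeping. The paper passes in a single line from $D^{\gl}(E^{j}_{n+1}\otimes 1)(D^{\gl})^{-1}=E^{j}_{n+1}\otimes K_jK_{n+1}^{-1}$ to $D^{\gl}(e_j\otimes 1)(D^{\gl})^{-1}=e_j\otimes k_j$, but since $k_j=K_jK_{n+1}^{-1}\sigma$ the bare Cartan conjugation actually gives $e_j\otimes K_jK_{n+1}^{-1}=e_j\otimes k_j\sigma$, which does not even lie in $A_j\otimes A_j$. You correctly observe that one must work with the bosonized diagonal $R_1D^{\gl}$ (the exact analogue of $D^{\sigma}=R_1D'$ in Lemma~\ref{lem}) and that the extra $R_1$-conjugation supplies the missing $\sigma$ on the second tensor factor. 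This is the right fix, and it is what the paper tacitly needs for the conclusion $e_j\otimes k_j=\D(e_j\otimes 1)$ to hold.
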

\begin{proof}
  By a direct computation,
  $$
  D^\gl E^j_{n+1}⊗1 = E^j_{n+1}⊗1 q^{\sum_{i\le n}
    ((E_i^i+\delta^i_j)⊗E_i^i -(E_{n+1}^{n+1}-1)⊗E_{n+1}^{n+1}} =
  E^j_{n+1}⊗K_jK_{n+1}^{-1}D^\gl
  $$
  Thus $D^\gl e_j⊗1(D^\gl)^{-1}=e_j⊗k_j$.  Similarly $D^\gl
  f_j⊗1(D^\gl)^{-1}=f_j⊗k_j^{-1}$.  Finally $k_i⊗1$ clearly commutes
  with $D^\gl$ and we can conclude using $\tau\circ
  D^\gl=D^\gl\circ\tau$.
\end{proof}

\begin{proof}[Proof of Theorem \ref{main}]
  Let us sum up what we proved up to now to obtain \ref{main}.  Let
  $V_{-1}(0^n, \alpha)\simeq\bigotimes_{i=1}^nV^i$ be the isomorphic
  representations of Proposition \ref{P:Vfactor}. In the following we
  fix such an isomorphism.
  Let $V^i_H$ be a $\UH$-module structure on $V^i$ extending the
  representation of $A_i$.  We therefore obtain $n$ commuting
  R-matrices $R^i=D^i\check R^i$ in
  $\End_\C(V^i⊗V^i)\hookrightarrow\End_\C(V_{-1}(0^n,
  \alpha)⊗V_{-1}(0^n, \alpha))$, where the explicit inclusion maps are
  given by $\iota_i : v\otimes w \mapsto (id^{\otimes i-1} \otimes v
  \otimes id^{\otimes n-i})\otimes (id^{\otimes i-1} \otimes w \otimes
  id^{\otimes n-i})$.  By Proposition \ref{P:quasiR},
  $$\check R^\gl_{|q=-1}=\prod_i \iota_i(\check R^i)\in 
  \End_\C(V_{-1}(0^n, \alpha)⊗V_{-1}(0^n,\alpha)),$$ 
  and by Lemma \ref{L:DH}, Proposition \ref{P:conjDgl}, and the
  density Lemma, the conjugation by $\prod_i \iota_i (D^i)$ is equal to the
  conjugation by $D^\gl$ in $\End_\C(V_{-1}(0^n, \alpha)⊗V_{-1}(0^n,
  \alpha))$.  Hence the braidings on
  $(\bigotimes_{i=1}^nV^i_H)⊗(\bigotimes_{i=1}^nV^i_H)$ and on
  $V_{-1}(0^n, \alpha)⊗V_{-1}(0^n, \alpha)$ are proportional.  Now in
  the process of computing both the Links-Gould invariant and the
  Alexander polynomial, the R-matrices are rescaled by the inverse of
  their twist $\theta^{-1}$ so that the invariants become framing
  independent:
  $$\trace_2(\theta^{-1}(\Id⊗\phi)\tau R)=\Id_{V_{-1}(0^n, \alpha)}$$
  (here $\phi$ denotes any of the pivotal structures which are equal
  by Proposition \ref{P:pivot}).  Hence the rescaled R-matrices
  $R^\gl_{|q=-1} = \prod_i \iota_i (R^H_{V^i⊗V^i})$ and $\bigotimes_iR^H_{V^i⊗V^i}$ are equal up to reordering factors.  Finally, for any braid $\beta\in B_\ell$, the
  associated operators by the Reshetikhin-Turaev construction
  correspond up to reordering as well:
  $$\Psi^\gl_{V_{-1}(0^n, \alpha)^{⊗ \ell}}(\beta)=
  \bp{\Psi^\UH_{V_{-\alpha}^{⊗ \ell}}(\beta)}^{⊗ n}.$$
  At the end, if $\trace_{2,3,...,\ell}\bp{(\Id_{V_{-1}(0^n, \alpha)} ⊗ \phi^{⊗
    \ell-1}) \circ \Psi^\gl_{V_{-1}(0^n, \alpha)^{⊗{\ell}}}(\beta)}=
  d.\Id_{V_{-1}(0^n, \alpha)}$ when\\
  $\trace_{2,3,...,\ell}\bp{(\Id_{V_\alpha} ⊗ \phi_U^{⊗ \ell-1}) \circ
  \Psi^\UH_{V_{\alpha}^{⊗{\ell}}}(\beta)}= c.\Id_{V_\alpha}$, we obtain
  $$
  d = c^n
  $$
  by considering the trace of these two maps. Indeed, the trace is blind to reordering factors.
\renewcommand{\qedsymbol}{\fbox{\ref{main}}}
\end{proof}
\renewcommand{\qedsymbol}{\fbox{\theteo}}

\begin{rem}
  In \cite{GP}, the $LG$ invariant is extended to a multivariable link
  invariant $M(L;q,q_1,\ldots,q_c)$ for links with $c\ge2$ ordered
  components, taking its values in Laurent polynomials
  $\Z[q^{\pm},q_1^\pm,\ldots,q_c^\pm]$.  It is shown in \cite{GP2} that 
  $$LG^{n,1}(\tau,q)=\bp{\prod_{i=0}^{n-1}\frac{q^i}\tau-\frac\tau{q^i}}
  M(L;q,\tau^{-1},\ldots,\tau^{-1}).$$ The proof in this paper should
  adapt to show that
  $$M(L;-1,q_1,\ldots,q_c)=\nabla(q_1,\ldots,q_c)^n$$
  where $\nabla$ is the Conway potential function, a version of the
  multivariable Alexander polynomial.
\end{rem}

\end{document}